\documentclass[12pt]{article}

\usepackage{amsmath,amssymb,mathrsfs,amsthm,xcolor, mathtools}
\usepackage[top=20truemm,bottom=20truemm,left=20truemm,right=20truemm]{geometry}

\newtheorem{Theorem}{Theorem}[section]

\newtheorem{Lemma}[Theorem]{Lemma}

\theoremstyle{definition}
\newtheorem{Definition}[Theorem]{Definition}

\newtheorem{Remark}[Theorem]{Remark}

\begin{document}
\title{Lorentzian coarea inequality} %文書タイトルの指定
\author{Hikaru Kubota\thanks{\texttt{u523299e@ecs.osaka-u.ac.jp}\textrm{, Department of Mathematics, Graduate School of Science, University of Osaka, Japan.}}} %著者名とメールアドレスの指定
  
\date{\today} %日付の指定
\maketitle %タイトルの出力

\begin{abstract}
In this article, we introduce the notion of locally uniformly $d$-controlling map between Lorentzian pre-length spaces which is preserving the diameters of causal diamonds, and through that we establish the coarea inequality for Lorentzian Hausdorff measure which is introduced by McCann and S\"{a}mann. Besides that we get a covering lemma for subsets in a Lorentzian pre-length space with a new local assumption named the local causal enlargement property, which enables us to enlarge causal diamonds. 
\end{abstract}

\section{Introduction}
Recently, low-regularity Lorentzian geometry has been studied intensively. Singularity theorems for less-regular spacetimes are investigated in $\cite{Gra;TC1}$, $\cite{1HsingC01}$, $\cite{2HsingC01}$, $\cite{Kun;Oha;Sch;Ste;HPC1}$, among others. We can find a study regarding a splitting theorem for less-regular spacetimes in $\cite{GeomC12}$. Moreover, basic causal structures and geometrical property of $C^{0}$-spacetimes are studied in $\cite{Lin;C0}$ and $\cite{Chr;Gra;C0}$ and comparison theorems for Lipschitz spacetimes are studied in $\cite{GeomC11}$. Geometry of causal structures on metric spaces or topological spaces is also investigated, which is usually called the synthetic Lorentzian geometry. In this article, we consider Lorentzian pre-length spaces. Other synthetic frameworks can be found for example in $\cite{Oct}$ and $\cite{Min;Suh1}$.

 In $\cite{Kun;Sa;pre-length}$, Kunzinger and S\"{a}mann introduced Lorentzian pre-length spaces. A Lorentzian pre-length space is a metric space $(X, d)$ equipped with causal and timelike relations, denoted by $\le$ and $\ll$, and a time separation function, denoted by $\tau$. We call the quintuplet $(X, d, \le, \ll, \tau)$ a Lorentzian pre-length space. Causally plain $C^{0}$-spacetimes hence smooth spacetimes form Lorentzian pre-length spaces. Moreover, Kunzinger and S\"{a}mann also addressed causal characters for curves on Lorentzian pre-length spaces and generalizations of causal structures of smooth spacetimes such as global hyperbolicity. Therefore we can regard Lorentzian pre-length spaces as a generalization of spacetimes. Furthermore, Lorentzian pre-length spaces can be considered as a Lorentzian counterpart of metric spaces. In $\cite{Kun;Sa;pre-length}$, Lorentzian length spaces are formulated as a notion corresponding to metric length spaces for Lorentzian pre-length spaces. Moreover, we can define curvature bounds of Lorentzian pre-length spaces in the same manner as metric spaces.

 In \cite{Kun;Sa;pre-length}, they define timelike sectional curvature bound of Lorentzian pre-length spaces by comparing geodesic triangles with those in model spaces with constant curvatures. In $\cite{Be;Ku;Oha;Ro}$ and $\cite{Be;Kun;Rot}$, the notion of timelike curvature bound via triangle comparison is revised and equivalence of notions of curvature bound on spacetimes is discussed, and in $\cite{Be;Oha;Ro;So;splliting}$ and $\cite{Be;Sa;angle}$, we can find specific structures including a splitting theorem on Lorentzian pre-length spaces under a timelike sectional curvature bound. In $\cite{Br}$, $\cite{Cav;Man;Mon;NullE2}$, $\cite{Cav;Mon}$, $\cite{McC;De}$, $\cite{B}$, and $\cite{Mon;Su}$ we can find synthetic formulations of Lorentzian pre-length spaces with Ricci curvature bounded from below through optimal transport theory, which are in the same manner as the formulation established by Sturm and Lott-Villani for metric measure spaces in \cite{LottVillani}, \cite{Strum1}, \cite{Strum2}, etc. As the notion of CD spaces includes Finsler manifolds, the class which consists of all Lorentzian pre-length spaces with timelike Ricci curvature bounded below includes Finsler spacetimes. We refer to \cite{Br;Oh} for more details.
 
 As in the case of Riemannian manifolds and metric spaces, convergence of spacetimes and Lorentzian pre-length spaces are also discussed in $\cite{Conv1}$, $\cite{Conv2}$, $\cite{Mul}$, and $\cite{Nol}$. 
 
 Therefore, we can consider the geometry of Lorentzian pre-length spaces as a generalization of the geometry of spacetimes and rebuilding of the synthetic Riemannian geometry such as the geometry of Alexandrov spaces, CAT(k)-spaces, and (R)CD spaces.

In $\cite{Mc;Sa}$, McCann and S\"{a}mann introduced a Lorentzian analog of Hausdorff measure on Lorentzian pre-length spaces. In the case of a spacetime $(M,g)$, we see that their Lorentzian Hausdorff measure on $M$ coincides with the volume measure $\mathrm{vol}^{g}$ induced by its Lorentzian metric $g$.

For a Lorentzian pre-length space $(X, d, \le, \ll, \tau)$ and $x\in X$, we denote the causal future of $x$ as $J^{+}(x)= \{p\in X\;|\; x\le p\}$ and causal past of $x$ as $J^{-}(x)$ by flipping the order of causal relation $\le$. Moreover, we denote chronological future and past as $I^{+}(x)$ and $I^{-}(x)$ in the same manner as $J^{+}(x)$ and $J^{-}(x)$ by replacing the causal relation $\le$ with the chronological relation $\ll$.

\begin{Definition}[Volume of causal diamonds]\label{Volume of diamond}
Let $N\ge0$ and $(M, d, \le, \ll, \tau)$ be a Lorentzian pre-length space. For $J(x,y)\coloneq J^{+}(x)\cap\; J^{-}(y)$ with $x\le y$ and $\tau(x,y)<\infty$, we define 
\[\rho_{N}(J(x,y))\coloneq\omega_{N}\tau(x,y)^{N},\]
where $\omega_{N}\coloneq\frac{\pi^{\frac{N-1}{2}}}{N\Gamma(\frac{N+1}{2})2^{N-1}}$ and $\Gamma(x):=\int_0^{\infty} t^{x-1}e^{-t}dt$ is the Euler's gamma function. 
\end{Definition}

\begin{Definition}[Lorentzian Hausdorff measure $\mathcal{V}^{N}$]\label{Def V^n}
Let $(X,d,\le,\ll,\tau)$ be a Lorentzian pre-length space and $N\ge0$. Set $\mathcal{J} \coloneq \{J(x,y)\;|\; x,y\in X,\;x<y\}\cup \{\emptyset\}$. By setting $\rho_{N}(\emptyset):=0$ and $\rho_{N}(J(x,y)):=\infty$ when $\tau(x,y)=\infty$, we extend $\rho_{N}: \mathcal{J} \to [0,\infty ]$. For $\delta >0$ and $A \subseteq X$, we define

\[\mathcal{V}_{d,\delta}^{N}(A):=\inf\left\{ \sum_{i=1}^{\infty} \rho_{N}(A_{i}) \;\Bigg{|}\; A_{i}\in \mathcal{J}, \mathrm{diam}_{d}(A_{i})<\delta,  A\subseteq \bigcup_{i=1}^{\infty} A_{i}\right\},\]
with $\inf\emptyset =\infty$. Then, since $\mathcal{V}^{N}_{d, \delta}(A)$ is non-increasing in $\delta>0$, we define \textit{$N$-dimensional Lorentzian Hausdorff measure} as

\[\mathcal{V}^{N}_{d}(A):=\lim_{\delta\to0} \mathcal{V}_{d,\delta}^{N}(A).\]
We call $\{A_{i}\}_{i=1}^{\infty}$ a \textit{causal $\delta$-covering} of $A$ if it satisfies $A_{i}\in \mathcal{J}$, $\mathrm{diam}_{d}(A_{i})<\delta$, and $A\subseteq \bigcup_{i=1}^{\infty} A_{i}$. When it is required to stress the specific choice of a distance function $d$, we denote the $s$-dimensional Lorentzian Hausdorff measure by $\mathcal{V}^{s}_{d}$.
\end{Definition}

In \cite{A}, we have established a comparison inequality for the Lorentzian Hausdorff measure by timelike Lipschitz maps. We continue to investigate fundamental properties of the Lorentzian Hausdorff measure and timelike Lipschitz maps.

The coarea formula plays important roles in various fields such as the geometrical measure theory and the geometry of metric spaces. The coarea inequality gives a one side inequality of the coarea formula.

\begin{Definition}\label{upper integration}
 Let $(X,\mu)$ be a measure space and $f:X\to [0, \infty]$ be a function defined in $\mu$-a.e. on $X$. Then, we define the \textit{upper integral} of $f$ as
\begin{multline*}
\int_{X}^{\ast}fd\mu\coloneq \inf\Bigg{\{}\int_{X}\phi d\mu \;\Bigg{|}\; \textrm{$\phi$ is measurable and } 0\le f(x)\le \phi(x) \textrm{ for $\mu$-a.e. $x\in X$}\Bigg{\}}.
\end{multline*}
\end{Definition}

\begin{Theorem}[Coarea inequality on metric spaces {\cite[Theorem 1.1]{Esm;Haj}}]\label{coarea inequality}
Let $(X, d_{X})$ and $(Y, d_{Y})$ be arbitrary metric spaces, $0\le t\le s<\infty$, and $E\subseteq X$. Then, for any Lipschitz map $F:X\to Y$ we have
  \[\int_{Y}^{\ast}\mathcal{H}^{s-t}_{d_{X}}(F^{-1}(y)\cap E)d\mathcal{H}^{t}_{d_{Y}}(y)\le(\mathrm{Lip}F)^{t}\frac{\omega_{s-t}\omega_{t}}{\omega_{s}} \mathcal{H}^{s}_{d_{X}}(E).\]
  Moreover, if we assume in addition $X$ is proper, $E$ is $\mathcal{H}^{s}_{d_{X}}$-measurable, and $\mathcal{H}^{s}_{d_{X}}(E)<\infty$, then the map $y\mapsto\mathcal{H}^{s-t}_{d_{X}}(F^{-1}(y)\cap E)$ is $\mathcal{H}^{t}_{d_{X}}$-measurable. Thus, it follows that for any $\mathcal{H}^{s}_{d_{X}}$-measurable function $g:X\to[0,\infty]$ we have 
   \[\int_{Y}\left\lparen\int_{F^{-1}(y)}gd\mathcal{H}^{s-t}_{d_{X}}\right\rparen d\mathcal{H}^{t}_{d_{Y}}(y)\le(\mathrm{Lip}F)^{t}\frac{\omega_{s-t}\omega_{t}}{\omega_{s}} \int_{X}gd\mathcal{H}^{s}_{d_{X}}.\]
\end{Theorem}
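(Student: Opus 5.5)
The plan is the classical Eilenberg-type argument in the metric-space setting, with the sharp constant $\frac{\omega_{s-t}\omega_t}{\omega_s}$ obtained through a refined covering. Throughout, $\omega_\cdot$ and $\mathcal{H}$ are the normalisations of the metric Hausdorff measure used in \cite{Esm;Haj}.

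We may assume $\mathcal{H}^{s}_{d_X}(E)<\infty$ and $L:=\mathrm{Lip}F<\infty$. Fix $\delta>0$ and choose a countable cover $\{C_i\}$ of $E$ with $\mathrm{diam}\,C_i<\delta$ and
\[
\sum_i \omega_s\Big(\tfrac{\mathrm{diam}\,C_i}{2}\Big)^s\le \mathcal{H}^s_{\delta}(E)+\delta .
\]
Replacing $C_i$ by its closure, we may take the $C_i$ closed. Define the Borel function
\[
\phi_\delta(y):=\sum_i \omega_{s-t}\Big(\tfrac{\mathrm{diam}\,C_i}{2}\Big)^{s-t}\chi_{F(C_i)}(y).
\]
This function is measurable: each $F(C_i)$ can be replaced by its closure in $Y$, since this does not change its diameter. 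Because the $C_i$ cover $E$, we have $\mathcal{H}^{s-t}_{\delta}(F^{-1}(y)\cap E)\le\phi_\delta(y)$ for every $y$.

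Next we integrate $\phi_\delta$ against $\mathcal{H}^t$. The crude bound is
\[
\mathcal{H}^t\big(\overline{F(C_i)}\big)\le \omega_t\Big(\tfrac{L\,\mathrm{diam}\,C_i}{2}\Big)^t .
\]
This bound is not literally true for $\mathcal{H}^t$ itself, only for the $\infty$-content $\mathcal{H}^t_\infty$. That gap is exactly why a naive argument loses a constant. Taking Fatou along $\delta\to0$ (with $\liminf\phi_\delta$ as the measurable majorant of $\mathcal{H}^{s-t}(F^{-1}(y)\cap E)$) then requires
\[
\int^{*}\liminf_\delta \phi_\delta\, d\mathcal{H}^t\le\liminf_\delta\int\phi_\delta\,d\mathcal{H}^t ,
\]
where the right side must be controlled by a content rather than a measure.

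The main obstacle is this replacement of $\mathcal{H}^t$ by the content $\mathcal{H}^t_\infty$ without losing the constant. I would follow Esmayli--Haj\l asz and integrate against $\mathcal{H}^t_{\eta}$ instead. The first ingredient is a lemma of Davies/Federer type (in the spirit of Federer 2.10.25): for the nonnegative Borel function $g=\sum_i a_i\chi_{B_i}$, where $B_i$ are sets of small diameter, one has
\[
\int^{*} g\,d\mathcal{H}^t_\eta\le\sum_i a_i\,\omega_t\big(\tfrac{\mathrm{diam}\,B_i}{2}\big)^t
\]
up to an error that vanishes as $\eta\to0$. Here the Choquet-type upper integral is taken with respect to the outer measure $\mathcal{H}^t_\eta$. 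The second ingredient is a monotone-convergence step, valid for increasing sequences of sets, that lets us pass $\eta\to0$ and recover $\mathcal{H}^t$. The cleanest route is to reduce to a layer-cake representation: write $\phi=\int_0^\infty\chi_{\{\phi>r\}}dr$ and estimate $\mathcal{H}^t_\eta(\{\phi>r\})$ by covering with the sets $F(C_i)$ themselves. Since these have diameter at most $L\delta\le\eta$, each set $\{\phi>r\}$ admits a cover by the $F(C_i)$ whose total weight is controlled. Combining the two estimates gives
\[
\int^*\mathcal{H}^{s-t}(F^{-1}(y)\cap E)\,d\mathcal{H}^t\le\sum_i\omega_{s-t}\omega_t L^t\Big(\tfrac{\mathrm{diam}\,C_i}{2}\Big)^s .
\]
Since $\sum_i\omega_s\big(\tfrac{\mathrm{diam}\,C_i}{2}\big)^s\le\mathcal{H}^s_\delta(E)+\delta$, the right side is at most $L^t\frac{\omega_{s-t}\omega_t}{\omega_s}\big(\mathcal{H}^s_\delta(E)+\delta\big)$. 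Letting $\delta\to0$ gives the first inequality.

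For the second part, assume $X$ is proper and $E$ is measurable with finite measure. By inner regularity, approximate $E$ by compact sets $K_j$ with $\mathcal{H}^s(E\setminus\bigcup_j K_j)=0$. For compact $K$, the map $y\mapsto\mathcal{H}^{s-t}_\delta(F^{-1}(y)\cap K)$ is upper semicontinuous, since covers of $F^{-1}(y)\cap K$ by open sets persist for nearby $y$. It is therefore Borel, and measurability follows by letting $\delta\to0$ and taking monotone limits over $j$. The $\mathcal{H}^s$-null remainder contributes zero by the first inequality, because it yields fibres of measure zero for $\mathcal{H}^t$-a.e. $y$. Finally, the inequality for $g$ follows by approximating $g$ by simple functions $\sum c_k\chi_{E_k}$, applying the set inequality to each $E_k$, and using monotone convergence.
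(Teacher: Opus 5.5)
Your proposal has a genuine gap, exactly at the step you call the main obstacle: bounding $\int^{\ast}\mathcal{H}^{s-t}(F^{-1}(y)\cap E)\,d\mathcal{H}^{t}(y)$ by $\liminf_{\delta\to0}\sum_i a_i\,\zeta(F(C_i))$. Here $a_i=\omega_{s-t}(\mathrm{diam}\,C_i/2)^{s-t}$ and $\zeta(S)=\omega_t(\mathrm{diam}\,S/2)^t$. The sharp constant has to come from this step, and the mechanism you sketch does not produce it.

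First, covering each level set $\{\phi_\delta>r\}$ by members of $\{F(C_i)\}$ overcharges overlaps. Take $B_1=\{a,b\}$, $B_2=\{b,c\}$ with $a\notin B_2$, $c\notin B_1$, and weights $a_1=a_2=1$. The level set is $\{a,b,c\}$ for $0\le r<1$, which needs both sets, and $\{b\}$ for $1\le r<2$. So the layer-cake bound is $\zeta(B_1)+\zeta(B_2)+\min\{\zeta(B_1),\zeta(B_2)\}$, not $\zeta(B_1)+\zeta(B_2)$.

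Second, the first ingredient itself is false. On indicators, subadditivity of the Choquet integral is exactly strong subadditivity $\mu(A\cup B)+\mu(A\cap B)\le\mu(A)+\mu(B)$ of the capacity, and $\mathcal{H}^t_\eta$ need not have it. For $t=0$ and points with $d(a,b),d(b,c)<\eta<d(a,c)$, the Choquet integral of $\chi_{B_1}+\chi_{B_2}$ against $\mathcal{H}^0_\eta$ is $3\omega_0$, while $\sum_i a_i\zeta(B_i)=2\omega_0$. This ratio holds at every scale, so no error term that vanishes as $\eta\to0$ can fix it.

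Third, $\phi_\delta$ dominates only $g_\delta(y)=\mathcal{H}^{s-t}_\delta(F^{-1}(y)\cap E)$, not $g$. Your second ingredient therefore needs continuity of $\mathcal{H}^t_\eta$ along $\{g_\delta>r\}\uparrow\{g>r\}$. That is an increasing sets lemma for Hausdorff contents, a delicate result you neither state nor justify for arbitrary metric spaces.

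The missing idea is never to integrate against a content. The paper only quotes this theorem, but its Sections 3--5 adapt the Esmayli--Haj\l asz strategy. One uses the weighted-covering integral $\int^{\bullet}f\,d\mathcal{H}^t=\lim_{\eta\to0}\inf\{\sum_i a_i\zeta(B_i): f\le\sum_i a_i\chi_{B_i},\ \mathrm{diam}\,B_i\le\eta\}$, the analogue of Definition \ref{Causal weighted integral}. Your cover already bounds this: for $\delta\le\delta_0$ the pairs $(a_i,F(C_i))$ form a weighted $L\delta$-cover of $y\mapsto\mathcal{H}^{s-t}_{\delta_0}(F^{-1}(y)\cap E)$, as in Lemmas \ref{step1 of A} and \ref{step2 of A}.

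The real work is proving $\int^{\ast}f\,d\mathcal{H}^t\le\int^{\bullet}f\,d\mathcal{H}^t$, the analogue of Theorem \ref{equality of integration}. Its key input is an upper density estimate, not a content estimate. If $A\supseteq\{f>0\}$ is Borel with finite measure, then for $\mathcal{H}^t$-a.e.\ $x\in A$ one has $\mathcal{H}^t(A\cap S)\le(1+\epsilon)\zeta(S)$ for all sufficiently small $S\ni x$. This is Lemma \ref{volume upper bound of causal diamonds}, proved with the enlargement covering Lemma \ref{causal covering lemma} and outer regularity of the finite measure $\mathcal{H}^t$ restricted to $A$. It is the correct substitute for the bound $\mathcal{H}^t(S)\le\zeta(S)$ that you rightly discarded. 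With it, one integrates $f\chi_{W_j}\le\sum_i a_i\chi_{A\cap B_i}$ against the genuine measure $\mathcal{H}^t$. The limits in $\delta_0$ and in the exhaustion then need only monotone convergence of upper integrals (Lemma \ref{flipping limit}), so no increasing sets lemma is required. In the paper, the reduction to such an $A$ is where $\sigma$-finiteness and properness of the target enter.

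Your measurability part matches the paper's proof of Theorem \ref{measurability of the integrand}: inner regularity, upper semicontinuity of $y\mapsto\mathcal{H}^{s-t}_\delta(F^{-1}(y)\cap K)$ via open covers and compactness, and the null remainder. It is fine, but it depends on the first inequality.
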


 In this article, we establish a Lorentzian counterpart of coarea inequality for maps satisfying properties named $\textit{uniformly $d$-controlling}$ and $\textit{timelike Lipschitz}$ and Lorentzian pre-length spaces with the $\textit{local causal enlargement property}$ and the \textit{causal estimation property}. Our strategy can be considered as an adaptation of that Esmayli and Haj\l asz established in $\cite{Esm;Haj}$ for metric spaces. In \cite{Cav;Mon;ISO}, we can find an analogous coarea type volume estimation on smooth spacetimes and Lorentzian pre-length spaces with timelike Ricci curvature bounded from below.

\begin{Definition}[Uniformly $d$-controlling and timelike Lipschitz map]\label{controlling map}
Let $(X, d_{X}, \le_{X}, \ll_{X}, \tau_{X})$ and $(Y, d_{Y}, \le_{Y}, \ll_{Y}, \tau_{Y})$ be Lorentzian pre-length spaces, and we say that a map $U:X\to Y$ is \textit{causality preserving} if $U(p)\le_{Y}U(q)$ holds for any $p,q\in X$ with $p\le_{X}q$, and \textit{timelike Lipschitz} if we can find some $\lambda>0$ such that for any $p,q\in X$ with $p\le_{X}q$, $\tau_{Y}(U(p), U(q))\le_{X} \lambda\tau_{X}(p,q)$ holds. For a timelike Lipschitz map $U$, we define 
\[\mathrm{TLip}U\coloneq\inf\{\lambda>0\;|\;\tau_{Y}(U(p), U(q))\le \lambda\tau_{X}(p,q) \textrm{ for any $p,q\in X $ with $p \hspace{-3pt}\le_{X}\hspace{-4pt} q$}\}.\]

Moreover, we say that $U:X\to Y$ is \textit{uniformly $d$-controlling} if $U$ is a causality preserving map and we can find a function $\eta:(0, \infty)\to(0, \infty)$ satisfying $\eta(s)\to 0$ as $s\to 0$ and for $J(p,q)\in \mathcal{J}$ with $\mathrm{diam}_{d_{X}}(J(p,q))<\delta$, $\mathrm{diam}_{d_{Y}}(J(U(p), U(q)))<\eta(\delta)$ holds.
\end{Definition}

\begin{Definition}[Local causal enlargement property]\label{local enlargemennt property}
For a Lorentzian pre-length space $(X, d, \le, \ll, \tau)$, $x\in X$ and $C_{1}^{x}, C_{2}^{x}\ge1$, we call a neighborhood $U_{x}$ of $x$ a \emph{causal enlargement neighborhood} for $\{C_{1}^{x}, C_{2}^{x}\}$ if any causal diamond $J(p,q)\subseteq U_{x}$ with $p\ll q$ admits a causal diamond $J(\tilde{p}, \tilde{q})$ such that
 \[\left\{ \begin{aligned} 
  & (1) \;J(\tilde{p}, \tilde{q}) \supseteq \bigcup\{J\in \mathcal{J}\;|\; J\cap J(p,q)\not=\emptyset,\; \textrm{diam}_{d}(J)\le 2\textrm{diam}_{d}(J(p,q))\},\\
  & (2) \;\mathrm{diam}_{d}(J(\tilde{p}, \tilde{q}))\le C_{1}^{x}\textrm{diam}_{d}(J(p,q)), \\
  & (3) \;\tau(\tilde{p}, \tilde{q}) \le C_{2}^{x}\tau(p, q).\\
             \end{aligned} 
             \right.\]
We call $J(\tilde{p}, \tilde{q})$ a \textit{causal enlargement} of $J(p,q)$. Moreover, we say that a Lorentzian pre-length space $(X, d, \le, \ll, \tau)$ satisfies the \textit{local causal enlargement property for $\{C_{1}^{x}, C_{2}^{x}\}$} if any $x\in X$ admits a causal enlargement neighborhood for $\{C_{1}^{x}, C_{2}^{x}\}$.

\end{Definition}

\begin{Theorem}[Lorentzian coarea inequality for upper integral]\label{LEilenberg's inequality}
  Let $(X, d_{X}, \le_{X}, \ll_{X}, \tau_{X})$ and $(Y, d_{Y}, \le_{Y}, \ll_{Y}, \tau_{Y})$ be Lorentzian pre-length spaces and $U:X\to Y$ be a uniformly $d$-controlling and timelike Lipschitz map. Let $0\le t\le s < \infty$. Assume that $Y$ satisfies the local causal enlargement property, $d_{Y}$ is a proper metric, $\mathcal{V}^{s}_{d_{Y}}(J(p,q))=0$ if $p$ and $q$ are null related, any causal diamond in $Y$ is closed, and $\mathcal{V}^{s}_{d_{Y}}$ is an outer regular measure and $\sigma$-finite on $Y$. Then, we have for any $E \subseteq X$
  \[\int_{Y}^{\ast}\mathcal{V}^{s-t}_{d_{X}}(U^{-1}(y)\cap E)d\mathcal{V}^{t}_{d_{Y}}(y)\le(\mathrm{TLip}U)^{t}\frac{\omega_{s-t}\omega_{t}}{\omega_{s}} \mathcal{V}^{s}_{d_{X}}(E).\]
\end{Theorem}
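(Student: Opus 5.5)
We adapt the Esmayli--Haj\l asz argument behind Theorem \ref{coarea inequality} to causal coverings. We may assume $\mathcal{V}^{s}_{d_{X}}(E)<\infty$, and fix $\delta>0$ and $\varepsilon>0$. The proof has three steps: cover $E$ efficiently, push each diamond forward into a nearly optimal open set in $Y$, and sum.

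\textbf{Step 1: covering $E$ and pushing forward.} Choose a causal $\delta$-covering $\{J(p_i,q_i)\}_{i}$ of $E$ with
\[\sum_{i}\omega_{s}\tau_X(p_i,q_i)^{s}\le \mathcal{V}^{s}_{d_X,\delta}(E)+\varepsilon .\]
Since $U$ is causality preserving, $U(J(p_i,q_i))\subseteq J(U(p_i),U(q_i))=:K_i$. By uniform $d$-control, $\mathrm{diam}_{d_Y}K_i<\eta(\delta)$. By the timelike Lipschitz property, $\tau_Y(U(p_i),U(q_i))\le \lambda\,\tau_X(p_i,q_i)$ for every $\lambda>\mathrm{TLip}U$.

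\textbf{Step 2: the pointwise fibre estimate.} For every $y\in Y$, the sets $J(p_i,q_i)$ with $y\in K_i$ cover $U^{-1}(y)\cap E$. Hence
\[\mathcal{V}^{s-t}_{d_X,\delta}(U^{-1}(y)\cap E)\le \sum_i \omega_{s-t}\,\tau_X(p_i,q_i)^{s-t}\,\chi_{K_i}(y).\]
Each $K_i$ is closed by hypothesis, hence Borel. The right-hand side is therefore a measurable majorant, and by Definition \ref{upper integration}
\[\int^{\ast}_Y \mathcal{V}^{s-t}_{d_X,\delta}(U^{-1}(y)\cap E)\,d\mathcal{V}^{t}_{d_Y}\le \sum_i\omega_{s-t}\,\tau_X(p_i,q_i)^{s-t}\,\mathcal{V}^{t}_{d_Y}(K_i).\]

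\textbf{Step 3: the key estimate, and the main obstacle.} We must show
\[\mathcal{V}^{t}_{d_Y}(K_i)\le C\,\omega_t\,\tau_Y(U(p_i),U(q_i))^{t}+(\text{small error}),\]
with $C$ tending to $1$. This step uses the structural hypotheses on $Y$.

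We split into cases according to the causal relation between $U(p_i)$ and $U(q_i)$.
\begin{itemize}
\item If they are null related, the hypothesis gives $\mathcal{V}^{s}_{d_Y}(K_i)=0$. We must then still control the $t$-dimensional measure. For this we would use outer regularity and $\sigma$-finiteness to pass to an open neighbourhood of small measure.
\item If $U(p_i)\ll U(q_i)$ and $\mathrm{diam}\,K_i$ is small, then $K_i$ lies in a causal enlargement neighbourhood. Here the local causal enlargement property, together with a Vitali-type covering lemma, lets us cover $K_i$ by diamonds of small diameter whose total $\rho_t$-mass is comparable to $\omega_t\tau_Y(U(p_i),U(q_i))^t$.
\end{itemize}
Properness of $d_Y$ is needed to make the enlargement constants $C_1^x,C_2^x$ locally uniform on compact sets. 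Those constants enter only through the covering lemma, not through the final estimate, so that the leading constant tends to $1$.

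I expect this comparison of $\mathcal{V}^t_{d_Y}(K_i)$ with $\rho_t(K_i)$ to be the main obstacle. The plan is to follow the Esmayli--Haj\l asz route, which works with $\mathcal{V}^{t}_{d_Y,\eta(\delta)}$ covering numbers rather than the full measure. For that purpose, Step 2 should be run with $\mathcal{V}^{t}_{d_Y,\eta(\delta)}(K_i)\le\rho_t(K_i)$, which follows directly from the definition since $K_i$ covers itself. The measure-theoretic hypotheses on $Y$ are then used only to justify interchanging the upper integral with the limits $\delta\to0$ via monotone convergence of outer measures.

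\textbf{Conclusion.} Combining Steps 2 and 3 gives
\[\sum_i\omega_{s-t}\,\tau_X(p_i,q_i)^{s-t}\,\omega_t\,\lambda^{t}\,\tau_X(p_i,q_i)^{t}=\lambda^t\frac{\omega_{s-t}\omega_t}{\omega_s}\sum_i\omega_s\,\tau_X(p_i,q_i)^{s}\le\lambda^t\frac{\omega_{s-t}\omega_t}{\omega_s}\bigl(\mathcal{V}^{s}_{d_X}(E)+\varepsilon\bigr).\]
Finally we let $\varepsilon\to0$ and $\lambda\downarrow\mathrm{TLip}U$. We then let $\delta\to0$, using monotone convergence for the increasing functions $y\mapsto\mathcal{V}^{s-t}_{d_X,\delta}(U^{-1}(y)\cap E)$ with measurable majorants; this last passage again uses the outer regularity and $\sigma$-finiteness of $\mathcal{V}^s_{d_Y}$.
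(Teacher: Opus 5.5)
\textbf{There is a genuine gap, in Step~3.} Steps~1--2 are correct. The remark at the end of Step~3 is also the right first half of the proof. Since $\mathrm{diam}_{d_Y}K_i<\eta(\delta)$, the pairs $(\rho_{s-t}(J(p_i,q_i)),K_i)$ form a causal weighted $\eta(\delta)$-covering of $y\mapsto\mathcal{V}^{s-t}_{d_X,\delta_0}(U^{-1}(y)\cap E)$ for any fixed $\delta_0\ge\delta$. This is exactly Lemma~\ref{step2 of A} and Theorem~\ref{Lcoarea inequality for causal weighted integration}; note that the integrand must be frozen at a scale $\delta_0$ while the covering scale tends to $0$.

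What is missing is the passage from there to the upper integral against the \emph{measure} $\mathcal{V}^t_{d_Y}$.
\begin{itemize}
\item \emph{Your first version} needs $\mathcal{V}^t_{d_Y}(K)\le(1+o(1))\rho_t(K)$ uniformly over small diamonds $K$, and no hypothesis gives this. Already in the universal cover of two-dimensional anti-de Sitter space, $g=\sec^2X\,(-dT^2+dX^2)$, the diamond between $(T,X)=(\pm a,0)$ with $0<a<\pi/2$ has $\tau=2a$. Yet its measure $\mathcal{V}^2_{d_h}=\mathrm{vol}^g$ is $4\log\sec a>2a^2=\rho_2$. For a general Lorentzian pre-length space, nothing forces the ratio to $1$ at small scales.
\item \emph{The local causal enlargement property cannot supply such a bound.} It controls $\rho$ of a \emph{larger} diamond containing all nearby small ones, which is what a Vitali argument uses. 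It does not subdivide $K$ into small diamonds of total mass about $\rho_t(K)$.
\item \emph{Your fallback does not close the gap.} $\mathcal{V}^t_{d_Y,\eta}$ is only an outer measure, so running Step~2 with it just produces the causal weighted integral $\int^{\bullet}$. The remaining inequality $\int^{\ast}_Y f\,d\mathcal{V}^t_{d_Y}\le\lim_{\eta\to0}\int^{\bullet}_Y f\,d\mathcal{V}^t_{d_Y,\eta}$ is not a monotone convergence statement. It says weighted coverings by small diamonds cannot asymptotically undercut the measure, and it is the real content of the theorem.
\item \emph{The null case.} Outer regularity cannot help: an open neighbourhood of $K_i$ has at least the measure of $K_i$. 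What you need is $\mathcal{V}^t_{d_Y}(K_i)=0$, i.e.\ the null-diamond hypothesis for the exponent $t$ you integrate against, which is how the paper actually uses its hypotheses on $Y$.
\end{itemize}

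\textbf{The missing ingredient is Theorem~\ref{equality of integration}.} Its proof never bounds $\mathcal{V}^t(K)$ for a prescribed diamond.
\begin{itemize}
\item \emph{Reduction.} Using $\sigma$-finiteness, properness and Lemma~\ref{estimate sets by Borel sets}, one reduces to $f$ whose support $A=\{f>0\}$ is Borel, of finite measure, and lies in a compact set at positive distance from its complement.
\item \emph{Density bound.} Lemma~\ref{volume upper bound of causal diamonds} gives an almost-everywhere, non-uniform estimate. Off a null set, each $x\in A$ has $\delta_x>0$ with $\mathcal{V}^t(A\cap J)\le(1+\epsilon)\rho_t(J)$ for all diamonds $x\in J\subseteq B(x,\delta_x)$.
\item \emph{Where the hypotheses enter.} They are used in this density bound, not in any limit interchange. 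Outer regularity gives an open $O$ around the bad set $G$ with $\mathcal{V}(O)<(1+\epsilon/4)\mathcal{V}(G)$. The null-diamond hypothesis makes the bad diamonds timelike, so they can be enlarged. The Vitali-type Lemma~\ref{causal covering lemma}, together with closedness of diamonds, then yields $\mathcal{V}_{C_1\delta}(G)\le(1+\epsilon/2)(1+\epsilon)^{-1}\mathcal{V}(G)$. The enlargement constants only multiply a tail of small measure, so the sharp constant survives.
\item \emph{Conclusion of that proof.} Take a nearly optimal weighted $j^{-1}$-covering $f\le\sum_i a_i\chi_{J_i}$, and let $W_j$ be the set of points where the density bound holds with $\delta_x=j^{-1}$. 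Then $f\chi_{W_j}\le\sum_{J_i\cap W_j\neq\emptyset}a_i\chi_{A\cap J_i}$, and the bound is applied to $A\cap J_i$, not to $J_i$. Letting $j\to\infty$, with $W_j\uparrow A$ up to a null set and Lemma~\ref{flipping limit}, finishes.
\end{itemize}

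With this theorem in hand, your content-level estimate, $\lambda\downarrow\mathrm{TLip}\,U$, and monotone convergence in $\delta_0$ give the result.
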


Moreover, we argue the measurability of the integrand in Theorem \ref{LEilenberg's inequality} with introducing another local structure on Lorentzian pre-length spaces, the \textit{causal estimation property}, and we get the Lorentzian coarea inequality. 
\begin{Definition} [Causal estimation property]\label{Causal estimation property}
  Let $(X, d, \le, \ll, \tau)$ be a Lorentzian pre-length space and $x\in X$. We call a neighborhood $U_{x}$ of $x\in X$ a \textit{causal estimation neighborhood} if have following two conditions:
  \begin{enumerate}
    \item[(1)]\;Any non-empty chronological diamond $I(p,q)\subseteq U_{x}$ satisfies $\textrm{diam}_{d}(I(p,q))=\textrm{diam}_{d}(J(p,q))$.
    \item[(2)]\;If $J(p,q)\subseteq U_{x}$, for every $\epsilon_{1}>1$ and $\epsilon_{2}>0$, we find a chronological diamond $I(\tilde{p}, \tilde{q})$ such that    
  \[\left\{ \begin{aligned}
&J(p,q)\subseteq I(\tilde{p}, \tilde{q}),\\
&\textrm{diam}_{d}(I(\tilde{p}, \tilde{q}))\le\epsilon_{1}\cdot\textrm{diam}_{d}(J(p,q)),\\
&\tau(\tilde{p},\tilde{q})\le\tau(p,q)+\epsilon_{2}.
 \end{aligned} \right.\]
  \end{enumerate}
 Moreover, we call such a chronological diamond $I(\tilde{p}, \tilde{q})$ a \textit{chronological} ($\epsilon_{1}$, $\epsilon_{2}$)-\textit{estimation} of $J(p,q)$. 
If any point in $X$ admits a causal estimation neighborhood, we say $X$ satisfies the \textit{causal estimation property}.
\end{Definition}
\begin{Theorem}[Lorentzian coarea inequality]\label{LORENTZIAN COAREA INEQUALITY}
  Let $(X, d_{X}, \le_{X}, \ll_{X}, \tau_{X})$ and $(Y, d_{Y}, \le_{Y}, \ll_{Y}, \tau_{Y})$ be Lorentzian pre-length spaces, $0\le t\le s<\infty$, and $U:X\to Y$. We assume the following:
  \[\left\{ \begin{aligned}
&(1)\;\textrm{$U$ is uniformly $d$-controlling, timelike Lipschitz, and continuous with respect to $d_X$ and $d_Y$}\\
&(2)\;\textrm{$Y$ satisfies the local causal enlargement property}\\
&(3)\;\textrm{Any causal diamond in $Y$ is closed}\\
&(4)\;\textrm{$d_{X}$ and $d_{Y}$ are proper}\\
&(5)\;\textrm{$\mathcal{V}^{s}_{d_{X}}$ is an inner regular measure}\\
&(6)\;\textrm{$\mathcal{V}^{t}_{d_{Y}}$ is an outer regular measure and $\sigma$-finite on $Y$}\\
&(7)\;\textrm{$\mathcal{V}^{s}_{d_{Y}}(J(p,q))=0$ if $p$ and $q$ are null related}\\
&(8)\;\textrm{$X$ satisfies the causal estimation property}.
 \end{aligned} \right.\]
 Then, for any $E\subseteq X$ such that is $\mathcal{V}^{s}_{d_{X}}$-measurable and satisfies $\mathcal{V}^{s}_{d_{X}}(E)<\infty$, we have
 \[\int_{Y}\mathcal{V}^{s-t}_{d_{X}}(U^{-1}(y)\cap E)d\mathcal{V}^{t}_{d_{Y}}(y)\le(\mathrm{TLip}U)^{t}\frac{\omega_{s-t}\omega_{t}}{\omega_{s}} \mathcal{V}^{s}_{d_{X}}(E),\]
 and thus for any $\mathcal{V}^{s}_{d_{X}}$-measurable function $g:X\to[0,\infty]$, 
 \[\int_{Y}\left\lparen\int_{U^{-1}(y)}gd\mathcal{V}^{s-t}_{d_{X}}\right\rparen d\mathcal{V}^{t}_{d_{Y}}(y)\le(\mathrm{TLip}U)^{t}\frac{\omega_{s-t}\omega_{t}}{\omega_{s}} \int_{X}gd\mathcal{V}^{s}_{d_{X}}\]
 holds.
\end{Theorem}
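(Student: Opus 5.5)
Theorem \ref{LORENTZIAN COAREA INEQUALITY} follows from Theorem \ref{LEilenberg's inequality} once we know that
\[y\mapsto \mathcal{V}^{s-t}_{d_X}(U^{-1}(y)\cap E)\]
is $\mathcal{V}^{t}_{d_Y}$-measurable, because then the upper integral coincides with the ordinary integral. The hypotheses of Theorem \ref{LEilenberg's inequality} hold under (1)--(7). (I read the outer regularity and $\sigma$-finiteness assumption of Theorem \ref{LEilenberg's inequality} as the one recorded in (6) for $\mathcal{V}^{t}_{d_Y}$, which is the measure actually integrated against.) The plan is therefore to prove measurability and then pass to functions $g$ by simple-function approximation.

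\textbf{Step 1 (compact sets).} Let $K\subseteq X$ be compact. I will show that $y\mapsto \mathcal{V}^{s-t}_{d_X}(U^{-1}(y)\cap K)$ is Borel, by proving that each approximating function $y\mapsto\mathcal{V}^{s-t}_{d_X,\delta}(U^{-1}(y)\cap K)$ is upper semicontinuous. Then $\mathcal{V}^{s-t}_{d_X}$ is a monotone limit of Borel functions.

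The causal estimation property (8) is used here to replace covers by causal diamonds with open covers. Fix a point and small $\epsilon_1>1$, $\epsilon_2>0$. Every diamond in a near-optimal cover of the compact set $U^{-1}(y)\cap K$ lies in a causal estimation neighborhood, after refining to small diameters. It can then be replaced by a chronological $(\epsilon_1,\epsilon_2)$-estimation $I(\tilde p,\tilde q)$. Property (1) of Definition \ref{Causal estimation property} gives $\mathrm{diam}_{d_X}(J(\tilde p,\tilde q))=\mathrm{diam}_{d_X}(I(\tilde p,\tilde q))\le\epsilon_1\,\mathrm{diam}_{d_X}(J(p,q))$, and $\rho$ grows by at most a controlled amount. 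The open sets $I(\tilde p,\tilde q)$ are open provided $I^{\pm}$ are open, which holds in Lorentzian pre-length spaces.

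By compactness finitely many of these cover $U^{-1}(y)\cap K$. Continuity of $U$ and compactness of $K$ then show that the same cover works for $U^{-1}(y')\cap K$ whenever $y'$ is close to $y$. Otherwise there would be $x_n\in K\setminus\bigcup I$ with $U(x_n)\to y$, and a limit point $x$ would lie in $U^{-1}(y)\cap K$ but outside the open union, a contradiction. This yields
\[\limsup_{y'\to y}\mathcal{V}^{s-t}_{d_X,\epsilon_1\delta}(U^{-1}(y')\cap K)\le \mathcal{V}^{s-t}_{d_X,\delta}(U^{-1}(y)\cap K)+\text{error}.\]
Letting $\epsilon_1\downarrow1$, $\epsilon_2\to0$ and then $\delta\to0$ gives Borel measurability of the limit function, possibly via its $\limsup$ version; some care with the order of limits is needed.

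\textbf{Step 2 (measurable $E$).} Take $E$ measurable with $\mathcal{V}^s_{d_X}(E)<\infty$. By inner regularity (5) and properness (4), choose an increasing sequence of compact sets $K_j\subseteq E$ with $\mathcal{V}^s_{d_X}(E\setminus\bigcup_j K_j)=0$. The functions $y\mapsto\mathcal{V}^{s-t}_{d_X}(U^{-1}(y)\cap K_j)$ increase to $y\mapsto\mathcal{V}^{s-t}_{d_X}(U^{-1}(y)\cap \bigcup_j K_j)$, since an outer measure is continuous along increasing sequences of measurable sets. The remainder $N=E\setminus\bigcup K_j$ is null. Applying Theorem \ref{LEilenberg's inequality} to $N$ gives
\[\int^*\mathcal{V}^{s-t}_{d_X}(U^{-1}(y)\cap N)\,d\mathcal{V}^t_{d_Y}=0,\]
so $\mathcal{V}^{s-t}_{d_X}(U^{-1}(y)\cap N)=0$ for $\mathcal{V}^t_{d_Y}$-a.e. $y$. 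Hence for a.e. $y$ the integrand for $E$ equals a measurable function, so it is measurable with respect to the completed $\mathcal{V}^t_{d_Y}$. The first inequality then follows from Theorem \ref{LEilenberg's inequality}.

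\textbf{Step 3 (functions).} For $g=\sum a_i\chi_{E_i}$ simple with $\mathcal{V}^s(E_i)<\infty$, the second inequality follows by linearity. If some $E_i$ has infinite measure, the right side is infinite and there is nothing to prove; restricting to $\sigma$-finite pieces is unnecessary here. For general $g$, approximate by an increasing sequence of simple functions and apply monotone convergence twice, once on the fibres and once on $Y$.

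\textbf{Main obstacle.} The hard part is Step 1, the semicontinuity. Replacing covers by causal diamonds with open chronological diamonds needs both conditions of the causal estimation property, and it needs uniform bookkeeping of the $\epsilon_2$ errors across the countably many diamonds. I would handle this by choosing $\epsilon_2=\epsilon\, 2^{-i}$ for the $i$-th diamond, using that $\tau^{s-t}$ is continuous in $\tau$, and working with finite subcovers so that only finitely many errors occur.
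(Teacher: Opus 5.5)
Your proposal is correct and is essentially the paper's proof. The paper also derives Theorem~\ref{LORENTZIAN COAREA INEQUALITY} from Theorem~\ref{LEilenberg's inequality} plus measurability of the integrand (Theorem~\ref{measurability of the integrand}), using the same inner-regular decomposition $E=N\cup\bigcup_i K_i$, the same disposal of $N$ via Theorem~\ref{LEilenberg's inequality} and Lemma~\ref{zero integration imply zero a.e.}, and the same upper-semicontinuity argument on compact $K$ with open chronological estimations, continuity of $U$ and compactness; it only packages the passage from causal to chronological covers into Lemma~\ref{Estimation of causal diamond by chronological dismonds} with the auxiliary measure $\mathcal{M}^{s}$. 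The order-of-limits issue you flag disappears if you choose $\epsilon_1$ separately for each diamond with $\epsilon_{1,i}\,\mathrm{diam}_{d_X}(J_i)<\delta$, which gives genuine upper semicontinuity of $y\mapsto\mathcal{V}^{s-t}_{d_X,\delta}(U^{-1}(y)\cap K)$, i.e.\ openness of the paper's sets $Z_{\alpha,\beta}(K)$.
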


This article is organized as follows. In section 2, we explain fundamental definitions and properties of Lorentzian pre-length spaces and Lorentzian Hausdorff measure, and we also introduce the causal weighted integration. In section 3, we prove the Lorentzian coarea inequality for causal weighted integration, and in section 4, by showing the coincidence of upper integration and causal weighted integration under some assumptions, we prove Theorem \ref{LEilenberg's inequality}. In section 5, we introduce the causal estimation property ans discuss the measurability of the integrand in Theorem \ref{LEilenberg's inequality} and conclude Theorem \ref{LORENTZIAN COAREA INEQUALITY}. Section 6 is for outlook.

\section*{Acknowledgements}

The author would like to thank his supervisor Shin-ichi Ohta for his remarkable supports for mathematical proofs and English grammar editing. This work is supported by JST SPRING, Grant Number JPMJST2138.

\section{Preliminaries}
 \subsection{Lorentzian pre-length spaces}
In this subsection, we review fundamental definitions and properties of Lorentzian pre-length spaces based on $\cite{Kun;Sa;pre-length}$.

\begin{Definition}[Lorentzian pre-length space $\cite{Kun;Sa;pre-length}$]
  For a metric space $(X, d)$, let $\le$ be a reflexive and transitive relation on $X$, and let $\ll$ be a transitive relation on $X$. Moreover, we assume that $\ll$ is included in $\le$. Let $\tau:X\times X \to [0, \infty ]$ be a lower semi-continuous function with respect to the topology induced by $d$. Additionally, we assume that $\tau$ satisfies the reverse triangle inequality, 
\[\tau(x, z) \ge \tau(x,y) + \tau(y, z) ,\]
for all $x,y,z\in X$ with $x \le y \le z$. Moreover, suppose that $\tau(x, y)=0$ if $x \not\le y$
and $\tau(x, y)>0$ if and only if $x\ll y$. Then we call the quintuplet $(X, d, \le, \ll, \tau)$ a \emph{Lorentzian pre-length space}. We call $\le$ and $\ll$ \emph{causal} and \emph{timelike relations}, respectively, and $\tau$ a \emph{time separation function}. By $x<y$, we mean that $x \le y$ and $x \neq y$, and we say that $p$ and $q$ are \textit{null related} if $p\le q$ and $p\not\ll q$ hold.
\end{Definition}
$\linebreak$Lorentzian pre-length spaces generalize \textit{causally plain $C^{0}$-spacetimes} hence smooth spacetimes. For such a spacetime $(M^{N}, g)$, we pick an arbitrary background Riemannian distance $d_{h}$, and then with causal and chronological relations denoted by $\le_{g}$ and $\ll_{g}$ and time separation function $\tau_{g}$ induced by the Lorentzian metric $g$, $(M^{N}, d_{h}, \le_{g}, \ll_{g}, \tau_{g})$ forms a Lorentzian pre-length space. In $\cite{Ale;Gra;Kun;Sa;cone}$, we can {find other possibly non-smooth examples of Lorentzian pre-length spaces named generalized cone structure. We can see generalized cone structures generalize FLRW-spacetimes.

Next, we summarize fundamental definitions.

\begin{Definition}[Future and past]\label{Def FuturePast}
For a Lorentzian pre-length space $(X, d, \le, \ll, \tau)$ and $x\in X$, we define \emph{chronological} and \emph{causal future} of $x$ as
\begin{description}

\item{(1)} $\mathit{I}^+(x) := \{y\in X \;|\; x\ll y\}$,

\item{(2)} $\mathit{J}^+(x) := \{y\in X \;|\; x\le y\}$. 
\end{description}
By flipping of the order of causal and chronological relations, we define $\mathit{J}^{-}(x)$ and $\mathit{I}^{-}(x)$. For $p,q\in X$ we define $J(p,q)\coloneq\{x\in X\;|\;p\le x\le q\}=J^{+}(p)\cap J^{-}(q)$ and $I(p,q)\coloneq\{x\in X\;|\;p\ll x\ll q\}=I^{+}(p)\cap I^{-}(q)$. We call the prior one \textit{causal diamond} and the latter one \textit{chronological diamond}. Notice that since the time separation function $\tau$ is lower semi-continuous, any chronological diamond is open. Moreover, we can see that 
$\{\mathit{I}(x, y) \;|\; x,y\in X\}$ is a subbase of a topology on $X$. We call this topology the \textit{Alexandrov topology}.
\end{Definition}

We can define causality of curves in the following way.
\begin{Definition}[Causal and timelike curves]\label{Def CausalTimelike curve}
Let $\gamma :I \to X$ be a non-constant locally Lipschitz continuous curve with respect to $d$ on an interval $I\subseteq \mathbb{R}$. Then, we say that $\gamma$ is a \emph{future-directed causal (timelike)} curve if for all $t_1, t_2 \in I$ with $t_1 <  t_2$, we have $\gamma (t_1) \le(\ll) \; \gamma (t_2)$. We define \emph{past-directed causal (timelike)} curves by flipping the order of causal (timelike) relation. 
\end{Definition}
$\linebreak$Similarly to the case of metric spaces, we can define the length of causal curves by time separation function.
\begin{Definition}[$\tau$-length]\label{Def tau-length}
Let $\gamma :[a,b] \to X$ be a future directed causal curve. Then, we define the $\tau$-\emph{length} of $\gamma$, denoted by $L_\tau(\gamma)$, as
\[L_\tau(\gamma) := \inf \Bigg{\{} \sum_{i = 0}^{L - 1} {\tau(\gamma (t_i), \gamma (t_{i+1}))}
\;\Bigg{|}\; L\in \mathbb{N}, a = t_0 \le t_1 \le \cdots \le t_L = b \Bigg{\}}.\]
\end{Definition} 
$\linebreak$We can see that $\tau$-length satisfies
\[L_{\tau}(\gamma|_{[x, z]}) = L_{\tau}(\gamma|_{[x, y]}) + L_{\tau}(\gamma|_{[y, z]}) \quad\mathrm{for \;all} \;x\le y\le z \in [a,b],\]
and $\tau$-length is reparametrization invariant. Moreover, we say that a causal curve $\gamma:[a,b]\to X$ is a \emph{maximal curve} if $L_{\tau}(\gamma)=\tau(\gamma(a),\gamma(b))$. For a maximal curve $\gamma:[a,b]\to X$, its $\tau$-length is larger than or equal to that of any other causal curve from $\gamma(a)$ to $\gamma(b)$. In $\cite{Kun;Sa;pre-length}$, we can find discussions regarding to the difference between the definition of causal characters of curves in Definition $\ref{Def CausalTimelike curve}$ and that in the usual way for spacetimes, i.e. for a smooth spacetime $(M,g)$ with the signature $(-, +, +,\dots, +)$, a curve $\gamma:[a,b]\to M$ called $\textit{causal (timelike)}$ if $g(\dot{\gamma}, \dot{\gamma}) \le(<) 0$ on every point in $[a,b]$.\\
We can define causal structures of Lorentzian pre-length spaces.

\begin{Definition}[Causality conditions]\label{Def causality condition}
A Lorentzian pre-length space $(X, d, \le, \ll, \tau)$ is called 
\begin{description}
\item{(1)} \emph{chronological} if $\ll$ is irreflexive, i.e. $x \not\ll x$ $\mathrm{for \;all}$ $x\in X$,

\item{(2)} \emph{causal} if $\le$ is a partial order, i.e. $x\le y$ and $y\le x$ imply $x = y$ $\mathrm{for \;all}$ $x, y\in X$,

\item{(3)} \emph{strongly causal} if the Alexandrov topology agrees with the topology induced by $d$,

\item{(4)} \emph{globally hyperbolic} if the following two conditions hold:
\begin{description}
\item{(i)} For any compact set $K\subseteq X$, there exists $C>0$ such that $L_{d}(\gamma)\le C$ for any causal curve $\gamma$ in $K$ (\textit{non-totally imprisoning}),

\item{(ii)} $J(x,y)$ is a compact set $\mathrm{for \;all}$ $x, y\in X$.

\end{description}
\end{description}

\end{Definition}
$\linebreak$
Moreover, Kunzinger and S\"{a}mann introduced local structures of Lorentzian pre-length spaces.
\begin{Definition}[Localizable Lorentzian pre-length space]\label{Localizable}
  Let $(X, d, \le, \ll, \tau)$ be a Lorentzian pre-length space. We call $X$ $\textit{localizable}$ if every $x\in X$ possesses a neighborhood $\Omega_{x}\subseteq X$ which satisfies the following properties:
  \begin{description}
    \item (i) There is a constant $C>0$ such that $L^{d}(\gamma)<C$ for all causal curve $\gamma\subseteq\Omega_{x}$, where $L^{d}(\gamma)$ is a length with regard to the distance $d$. 
    \item (ii) There is a continuous map $\omega_{x}:\Omega_{x}\times\Omega_{x}\to[0, \infty]$ such that $(\Omega_{x}, d|_{\Omega_{x}\times\Omega_{x}}, \le\hspace{-4pt}|_{\Omega_{x}\times\Omega_{x}}, \ll\hspace{-4pt}|_{\Omega_{x}\times\Omega_{x}}, \omega_{x})$ forms a Lorentzian pre-length space with the following property: For every $y\in\Omega_{x}$ $I^{\pm}(y)\cap\Omega_{x}\not=\emptyset$.
    \item (iii) For all $p,q\in \Omega_{x}$ with $p<q$, there is a future directed causal curve $\gamma_{p,q}$ from $p$ to $q$ which satisfies
   \[\left\{ \begin{aligned} & (1) \textrm{ For any causal curve }\lambda\subseteq\Omega_{x} \textrm{ from  $p$ to $q$ we have } L_{\tau}(\gamma_{p,q})\ge L_{\tau}(\lambda), \\
                             & (2) \;L_{\tau}(\gamma_{p,q})=\omega_{x}(p,q)\le\tau(p,q). \\
             \end{aligned} 
             \right.
             \]
  \end{description}
  We call such $\Omega_{x}$ $\textit{localizable neighborhood of x}$. Moreover if $\Omega_{x}$ can be chosen such that 
  \begin{description}
    \item (iv) For any $p,q\in\Omega_{x}$ timelike related, $\gamma_{p,q}$ has strictly larger $L_{\tau}$ length than any future directed causal curve from $p$ to $q$ containing a null segment, 
  \end{description}
  $X$ is called $\textit{regular localizable}$.
\end{Definition}

\begin{Remark}
Through the notion of localizability of Lorentzian pre-length space, we can define Lorentzian length space, which is a similar notion to metric length space. In Lorentzian length spaces, causal structures mentioned in Definition \ref{Def causality condition} are related as follows: $(4)\Rightarrow(3)\Rightarrow(2)\Rightarrow(1)$ (\cite[Theorem 3.26]{Kun;Sa;pre-length}).  Moreover, any globally hyperbolic Lorentzian length space satisfies that the time separation function is finite and continuous and that any two causally related points are connected by a maximal curve (\cite[Theorem 3.28, Theorem 3.30]{Kun;Sa;pre-length}).
\end{Remark}

Finally we recall Definition 1.6 for the local causal enlargement property.

 \subsection{Lorentzian Hausdorff measure and causal weighted integration}
 In $\cite{Mc;Sa}$, McCann and S\"{a}mann introduced the \emph{Lorentzian Hausdorff measure} as a notion of Lorentzian analog of Hausdorff measure. Recall Definitions \ref{Volume of diamond} and \ref{Def V^n} for the definitions of $\rho_N$ and the Lorentzian Hausdorff measure $\mathcal{V}_d^N$.

\begin{Remark}
We can see that for an integer $N\ge2$ and $x,y\in X$ with $\tau(x,y)<\infty$, 
\[\rho_{N}(J(x,y))=\mathrm{vol}^{\mathbb{R}_{1}^{N}}(\tilde{J}(\tilde{x},\tilde{y})),\]
where $\tilde{x}$ and $\tilde{y}$ are points in the $N$-dimensional Minkowski space $\mathbb{R}_{1}^{N}$ such that $\tilde{\tau}(\tilde{x},\tilde{y})=\tau(x,y)$. Recall that the above equation holds independently from the choice of $\tilde{x}$ and $\tilde{y}$.

\end{Remark}

$\linebreak$
We can easily see that $\mathcal{V}^{s}_{d}$ is a Borel measure on $X$ similarly to the case of Hausdorff measures on metric spaces. Moreover, we find various properties of Lorentzian Hausdorff measure in $\cite{Mc;Sa}$.

\begin{Theorem}[{\cite[Proposition 3.8]{Mc;Sa}}]
  Let $(X, d, \le, \ll, \tau)$ be a strongly causal Lorentzian pre-length space. Let $\gamma:[a,b]\to X$ be a future directed causal curve. Then, we have
  \[\mathcal{V}^{1}_{d}(\gamma([a,b]))\le L_{\tau}(\gamma).\]
  Moreover, if we assume that any causal diamond in $X$ is closed, the equality is attained, i.e.
  \[\mathcal{V}^{1}_{d}(\gamma([a,b]))= L_{\tau}(\gamma).\]
\end{Theorem}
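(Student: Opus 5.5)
We prove the inequality $\mathcal{V}^{1}_{d}(\gamma([a,b]))\le L_{\tau}(\gamma)$ by building explicit causal $\delta$-coverings of the image out of diamonds along $\gamma$. For the reverse inequality, under closedness of diamonds, we show that any causal $\delta$-covering of $\gamma([a,b])$ has total $\tau$ at least $L_\tau(\gamma)$, using the reverse triangle inequality and compactness of the parameter interval.

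\textbf{Upper bound.} Fix $\delta>0$. Since $\gamma$ is continuous on the compact interval $[a,b]$ and $X$ is strongly causal, the Alexandrov topology agrees with the metric topology.

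For every $t\in[a,b]$ we find a neighbourhood in which small causal diamonds spanned by points of $\gamma$ have $d$-diameter $<\delta$. Concretely, choose $x\ll \gamma(t)\ll y$ with $I(x,y)\subseteq B_d(\gamma(t),\delta/3)$. Then, for parameters $t_1<t_2$ near $t$, the points $\gamma(t_i)$ lie in $I(x,y)$, and
\[J(\gamma(t_1),\gamma(t_2))\subseteq J(x,y).\]
The main obstacle is to control $\mathrm{diam}_d J(\gamma(t_1),\gamma(t_2))$ rather than that of $I(x,y)$. I would handle it by shrinking: pick $x',y'$ inside $I(x,y)$ on the curve's timelike neighbourhood, and use that $J(x',y')\subseteq I(x,y)$ when $x\ll x'$ and $y'\ll y$, by push-up transitivity of $\ll$ with $\le$ in the form available for pre-length spaces.

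By a Lebesgue number argument, there is a partition $a=t_0<\dots<t_L=b$ such that each $J_i=J(\gamma(t_i),\gamma(t_{i+1}))$ has diameter $<\delta$. Each $J_i$ contains $\gamma([t_i,t_{i+1}])$. By the reverse triangle inequality,
\[\sum_i \tau(\gamma(t_i),\gamma(t_{i+1}))\le\tau(\gamma(a),\gamma(b))<\infty \quad\text{locally}.\]
Taking a further refinement, using the definition of $L_\tau$ as an infimum and the fact that refining only decreases these sums, gives $\mathcal{V}^1_{d,\delta}(\gamma([a,b]))\le L_\tau(\gamma)+\epsilon$. Note that $\rho_1(J)=\omega_1\tau$ with $\omega_1=1$. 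Letting $\epsilon,\delta\to0$ yields the first claim.

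\textbf{Lower bound.} Let $\{A_i=J(p_i,q_i)\}$ be a causal $\delta$-covering of $\gamma([a,b])$. Each set $\gamma^{-1}(A_i)$ is closed, since diamonds are closed. Moreover, it is order-convex in $[a,b]$, because $\gamma$ is causal and $\le$ is transitive, so it is a closed interval $[a_i,b_i]$. On it we have
\[\tau(p_i,q_i)\ge\tau(\gamma(a_i),\gamma(b_i))\ge L_\tau(\gamma|_{[a_i,b_i]})\]
by the reverse triangle inequality. Countably many closed intervals cover $[a,b]$. To extract a finite chain, we first enlarge the intervals slightly, which costs $\epsilon$ via the continuity of $t\mapsto L_\tau(\gamma|_{[a,t]})$. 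We then use compactness to select a finite overlapping chain and apply additivity of $L_\tau$. This gives
\[\sum_i\tau(p_i,q_i)\ge L_\tau(\gamma)-\epsilon.\]
The delicate point is this continuity of $L_\tau$ along $\gamma$, which I expect to obtain from strong causality. If it fails, I would instead argue directly with nested closed intervals, using that a countable union of disjoint closed intervals cannot cover $[a,b]$ nontrivially (Sierpiński's theorem).
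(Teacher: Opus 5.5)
The paper gives no proof of this statement; it is quoted from McCann--S\"{a}mann. Your proposal therefore has to stand on its own.

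\textbf{Upper bound.} This part is essentially correct, with two small fixes. First, strong causality only provides a basic neighbourhood $V=\bigcap_{k=1}^{m}I(x_k,y_k)\subseteq B_d(\gamma(t),\delta/3)$, not a single $I(x,y)$. Push-up ($x\ll u\le z\Rightarrow x\ll z$, which follows from the reverse triangle inequality) then gives $J(u,v)\subseteq V$ for all $u\le v$ in $V$ directly, so no shrinking is needed. Second, you must discard partition intervals with $\gamma(t_i)=\gamma(t_{i+1})$, because then $J(\gamma(t_i),\gamma(t_{i+1}))\notin\mathcal{J}$. The same push-up shows that $\le$ is antisymmetric, so $\gamma$ is constant on such intervals and nothing is lost.

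\textbf{Lower bound: the gap is exactly where you suspected it.} Set $\ell(t):=L_\tau(\gamma|_{[a,t]})$. Your reduction needs $\sum_i(\ell(b_i)-\ell(a_i))\ge\ell(b)-\ell(a)$ for every countable cover of $[a,b]$ by closed intervals. This fails as soon as $\ell$ has a jump: for a jump at $t_0$ from the left, use $[a,t_0-1/N]$, the intervals $[t_0-1/n,t_0-1/(n+1)]$ for $n\ge N$, and $[t_0,b]$. These intervals overlap, so Sierpi\'nski's theorem says nothing. Strong causality is also not what excludes jumps.

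What does exclude them, provided $L_\tau(\gamma)<\infty$, is lower semicontinuity of $\tau$ together with the reverse triangle inequality. Suppose $\ell$ jumps by $c>0$ at $t_0$ from the left. Then $\tau(\gamma(w),\gamma(t_0))\ge L_\tau(\gamma|_{[w,t_0]})\ge c$ for every $w<t_0$. Lower semicontinuity at $(\gamma(w),\gamma(t_0))$ gives some $w'\in(w,t_0)$ with $\tau(\gamma(w),\gamma(w'))>c/2$. Hence $\tau(\gamma(w),\gamma(t_0))>c/2+\tau(\gamma(w'),\gamma(t_0))$. Induction then gives $\tau(\gamma(w),\gamma(t_0))=\infty$ for all $w<t_0$. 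So every partition sum for $\gamma|_{[w,t_0]}$ is infinite, contradicting $L_\tau(\gamma)<\infty$. Right jumps are handled symmetrically.

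With this lemma, your argument proves $\mathcal{V}^1_d(\gamma([a,b]))\ge L_\tau(\gamma)$ whenever $L_\tau(\gamma)<\infty$. Closedness of diamonds is then not even needed: $\tau(p_i,q_i)\ge\ell(v)-\ell(u)$ for all $u\le v$ in the order-convex set $\gamma^{-1}(J(p_i,q_i))$, and $\ell$ induces an atomless Stieltjes measure.

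\textbf{The case $L_\tau(\gamma)=\infty$.} You do not treat this case, and your $\epsilon$-enlargement is meaningless there. In fact it cannot be treated, because the equality as stated fails. Take $X=\mathbb{R}$ with the usual metric and order, $x\ll y$ iff $x<y$, and define
$\tau(x,y)=y-x+(\log(y/x))^2$ if $0<x<y$;
$\tau(x,y)=\infty$ if $x\le0<y$;
$\tau(x,y)=y-x$ if $x<y\le0$;
$\tau(x,y)=0$ if $x\ge y$.

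This is a strongly causal Lorentzian pre-length space with closed diamonds:
\begin{itemize}
\item $\tau$ is lower semicontinuous, since $(\log(y'/x'))^2\to\infty$ as $x'\downarrow0$.
\item The reverse triangle inequality holds, since $(\alpha+\beta)^2\ge\alpha^2+\beta^2$.
\item $I(p,q)=(p,q)$, so the Alexandrov topology is the Euclidean one.
\item $J(p,q)=[p,q]$ is closed.
\end{itemize}

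For $\gamma(t)=t$ on $[0,1]$, every partition contains a term $\tau(0,t_j)=\infty$, so $L_\tau(\gamma)=\infty$. Now cover $0$ by $[-\epsilon,0]$, and cover $(0,1]$ by the intervals $[x_{k+1},x_k]$ with $x_0=1$ and $\log(x_k/x_{k+1})=\theta/(k+1)$. This costs at most $\epsilon+1+\theta^2\pi^2/6$, and all diameters are below $\delta$ once $\epsilon,\theta<\delta$. Hence $\mathcal{V}^1_d(\gamma([0,1]))\le1$.

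So an extra hypothesis such as $L_\tau(\gamma)<\infty$ (for instance, finiteness of $\tau$) is required. Under it, your proof, completed by the continuity lemma above, goes through.
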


\begin{Theorem}[{\cite[Theorem 4.8]{Mc;Sa}}]\label{Th Volg=V^n}
Let $(M,g)$ be a strongly causal, continuous and causally plain spacetime of dimension $N$ and $\mathrm{vol}^{g}$ be the volume measure of the Lorentzian metric $g$ on $M$. Then, $\mathrm{vol}^{g}=\mathcal{V}^{N}_{d_{h}}$ holds. Here, $d_{h}$ is an arbitrarily taken Riemannian distance of any complete Riemannian metric $h$ on $M$.

\end{Theorem}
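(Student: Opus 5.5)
The plan is to show that $\mathrm{vol}^{g}$ and $\mathcal{V}^{N}_{d_{h}}$ agree on small open sets around each point, and then pass to all Borel sets. Both are Borel measures, and both are locally finite: for $\mathcal{V}^{N}_{d_h}$ this will come out of the upper bound below. Strong causality makes the Alexandrov topology coincide with the manifold topology, so small causal diamonds form a basis of neighbourhoods with $d_h$-diameters going to zero. The choice of complete $h$ is irrelevant locally, since any two Riemannian distances are bi-Lipschitz on compacta, and the definition of $\mathcal{V}^{N}_{d}$ only uses that diameters tend to zero.

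\emph{Localisation.} Fix $p\in M$. Choose coordinates with $g_{p}=\eta$, the Minkowski metric. By continuity of $g$, for every $\varepsilon>0$ there is a coordinate neighbourhood $V_{\varepsilon}$ on which the cones are sandwiched,
\[
\eta_{\varepsilon}^{-}\prec g\prec\eta_{\varepsilon}^{+},
\]
where $\eta^{\pm}_{\varepsilon}$ are flat metrics with slightly wider and narrower cones. On $V_\varepsilon$ we also have $(1-\varepsilon)\,dx\le \mathrm{vol}^{g}\le(1+\varepsilon)\,dx$. Since $g$ is causally plain, causal and timelike relations inside $V_\varepsilon$ are squeezed between those of $\eta^{\pm}_\varepsilon$. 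In particular $J_{g}(x,y)$ contains $J_{\eta^{-}_{\varepsilon}}(x,y)$ and is contained in $J_{\eta^{+}_{\varepsilon}}(x,y)$, and $\tau_g$ is squeezed between $\tau_{\eta^\pm_\varepsilon}$. Working inside $V_\varepsilon$ and letting $\varepsilon\to0$ reduces everything to near-Minkowski estimates.

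\emph{Upper bound $\mathcal{V}^{N}\le\mathrm{vol}^{g}$.} I would first show $\mathcal{V}^{N}\ll\mathcal{L}^{N}$ locally. A Euclidean ball of radius $r$ lies in an upright $g$-diamond whose $\tau_g$ is $\le Cr$, hence with $\rho_N\le C'r^{N}$. Covering a Lebesgue-null set by small balls of small total $r^N$ then gives $\mathcal{V}^{N}$-measure zero. Next, for an open $W\subseteq V_{\varepsilon}$, upright $\eta$-diamonds have bounded eccentricity, so they form a Vitali class for Lebesgue measure. Choose disjoint $g$-diamonds $J_g(x_i,y_i)\subseteq W$ with $x_i,y_i$ on a fixed coordinate time axis, of diameter $<\delta$, exhausting $W$ up to a null set. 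For these upright diamonds the sandwich gives
\[
\rho_{N}(J_g(x_i,y_i))\le(1+C\varepsilon)\,\mathrm{vol}^{g}(J_g(x_i,y_i)),
\]
because the Minkowski volume of an upright diamond is exactly $\omega_{N}\tau^{N}$. Summing yields $\mathcal{V}^{N}_{d_h,\delta}(W)\le(1+C\varepsilon)\mathrm{vol}^{g}(W)$ plus the null part. The null part is handled by the absolute continuity above.

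\emph{Lower bound $\mathrm{vol}^{g}\le\mathcal{V}^{N}$.} Take any causal $\delta$-covering $\{J_g(x_i,y_i)\}$ of $W$ with $\delta$ small enough that all pieces lie in $V_\varepsilon$. It suffices to prove the uniform estimate
\[
\mathrm{vol}^{g}(J_g(x,y))\le(1+o_{\varepsilon}(1))\,\omega_{N}\tau_{g}(x,y)^{N}
\]
for all $x\le y$ in $V_\varepsilon$, including \emph{tilted, nearly null} pairs. I expect this to be the main obstacle. For such pairs $J_{\eta^{+}_{\varepsilon}}(x,y)$ can have much larger $\tau$ than $\tau_g(x,y)$, so the naive sandwich fails. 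The first thing I would try is to apply a Lorentz boost of $\eta$ adapted to the direction $y-x$, which maps $J_\eta(x,y)$ to an upright diamond while preserving $\eta$-volume and $\tau_\eta$. The trouble is that boosts distort the $\varepsilon$-cone sandwich unboundedly as $y-x$ approaches the null cone. To control this, I would instead slice $J_g(x,y)$ by level sets of $\tau_g(x,\cdot)$ and use the reverse triangle inequality $\tau_g(x,z)+\tau_g(z,y)\le\tau_g(x,y)$. The aim is to bound the volume by a Minkowski-type integral that depends only on $\tau_g(x,y)$, with errors controlled by the cone aperture $\varepsilon$ rather than by the tilt. If this succeeds, then $\mathrm{vol}^{g}(W)\le(1+o(1))\sum\rho_N$. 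Letting $\delta\to0$ and then $\varepsilon\to0$ gives equality on small open sets, and hence, by outer regularity of $\mathrm{vol}^{g}$, on all Borel sets.
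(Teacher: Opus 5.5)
The paper gives no proof of this statement. It is quoted from McCann--S\"amann, so your proposal has to stand on its own.

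Your upper bound $\mathcal{V}^{N}_{d_h}\le\mathrm{vol}^{g}$ is essentially sound: absolute continuity, the Vitali covering by upright diamonds, exactness of $\omega_N\tau^N$ in Minkowski space, and $\tau_g\le\tau_{\eta^{+}_{\varepsilon}}$ on upright pairs. Two details need fixing. First, $\eta^{+}_{\varepsilon}$ must satisfy $-g(X,X)\le-\eta^{+}_{\varepsilon}(X,X)$ on $g$-causal $X$, not only cone inclusion. Second, since $V_\varepsilon$ shrinks with $\varepsilon$, you must split a fixed Borel set into countably many pieces lying in sets $V_\varepsilon(p)$ before letting $\varepsilon\to0$.

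The genuine gap is the lower bound $\mathrm{vol}^{g}\le\mathcal{V}^{N}_{d_h}$. You do not prove the estimate $\mathrm{vol}^{g}(J_g(x,y))\le(1+o_{\varepsilon}(1))\,\omega_N\tau_g(x,y)^N$ on which it rests. In the form you state it, uniform over all $x\le y$ in $V_\varepsilon$ with error controlled by the cone aperture alone, it is false. Slicing by level sets of $\tau_g(x,\cdot)$ cannot rescue it: the reverse triangle inequality only gives the trivial inclusion $J_g(x,y)\subseteq\{z:\tau_g(x,z)+\tau_g(z,y)\le\tau_g(x,y)\}$, which carries no volume information.

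Here is a counterexample with $N=2$.
\begin{itemize}
\item On $\mathbb{R}^2$ let $g=-du\,dv+f\,dv^2$. Then $\sup|g-\eta|=\sup|f|$ for $\eta=-du\,dv$, and $\mathrm{vol}^{g}=\tfrac12\,du\,dv$ whatever $f$ is.
\item Let $\Omega^2>0$ be smooth on $[0,a]\times[0,L]$ with $\int_0^a\Omega^2(\cdot,v)$ independent of $v$. The substitution $u=\int_0^{\tilde u}\Omega^2(\cdot,v)$, $f=\int_0^{\tilde u}\partial_v\Omega^2(\cdot,v)$ turns $g$ into $-\Omega^2\,d\tilde u\,dv$. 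So $(\tilde u,v)$ are null coordinates, $f$ vanishes on $\tilde u=0$ and $\tilde u=a$, and $\sup|f|\le a\sup|\partial_v\Omega^2|$.
\item Take for $\Omega^2$ a smoothing of the checkerboard equal to $1$ on $[0,\frac a2]\times[0,\frac L2]$ and $[\frac a2,a]\times[\frac L2,L]$, and equal to $3$ on the other two quadrants. Keep $\int_0^a\Omega^2(\cdot,v)=2a$, and spread the change in $v$ over an interval of length $\ell\ll L$. Then $\sup|f|\le Ca/\ell$, which is $\le\varepsilon$ once $a\le\varepsilon\ell/C$.
\item The diamond $J$ between the corners is the whole rectangle, and $\mathrm{vol}^{g}(J)=aL$.
\item Future causal curves are monotone in $(\tilde u,v)$, so each meets at most one heavy quadrant. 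Optimizing over straight pieces gives $\tau_g^2\to\frac43aL$.
\end{itemize}
Hence $\mathrm{vol}^{g}(J)/\omega_2\tau_g^2\to\frac32$, for a smooth metric that is $\varepsilon$-close to $\eta$, for every $\varepsilon$ and at every scale.

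What this shows depends on regularity. For a fixed smooth $g$ the construction needs $|\partial_u f|\sim1/\ell\to\infty$. So a tilt-independent bound there must use derivatives of $g$, not the $C^0$ sandwich. For merely continuous $g$, implant rescaled copies with amplitudes $\varepsilon_k\to0$ at scales $r_k\to0$ near a point. This gives a metric for which no neighbourhood of that point satisfies your bound at all. The lower bound in the continuous case therefore needs an argument that does not pass through a uniform estimate on every small diamond, and your proposal contains none.
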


Next, we introduce the $\textit{causal weighted integration}$ besides the upper integration in Definition 1.3. The following lemmas regarding to the upper integral follow immediately.

\begin{Lemma}\label{zero integration imply zero a.e.}
  Let $(X, \mu)$ be a measure space and $f:X\to[0, \infty]$ with zero upper integral. Then, $f=0$ $\mu$-almost everywhere.
\end{Lemma}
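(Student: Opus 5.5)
The claim is that if $\int_X^\ast f\,d\mu = 0$ then $f=0$ $\mu$-a.e. I would unwind Definition \ref{upper integration} directly. Since the upper integral is an infimum, for each $n\in\mathbb{N}$ there is a measurable function $\phi_n$ with $0\le f\le\phi_n$ $\mu$-a.e. and $\int_X\phi_n\,d\mu<2^{-n}$. The natural move is to replace this sequence by a single measurable majorant of integral zero.

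\textbf{Step 1: a measurable majorant with integral zero.} Set $\phi\coloneq\inf_n\phi_n$. As a countable infimum of measurable functions, $\phi$ is measurable with values in $[0,\infty]$. Each inequality $f\le\phi_n$ fails only on a $\mu$-null set $N_n$. Hence off the null set $\bigcup_n N_n$ we have $f\le\phi_n$ for every $n$, and so $0\le f\le\phi$ $\mu$-a.e. Monotonicity of the integral gives $0\le\int_X\phi\,d\mu\le\int_X\phi_n\,d\mu<2^{-n}$ for every $n$, so $\int_X\phi\,d\mu=0$.

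\textbf{Step 2: conclude.} Since $\phi\ge0$ is measurable with zero integral, the standard Markov/Chebyshev argument applies. For each $k$,
\[\mu(\{\phi>1/k\})\le k\int_X\phi\,d\mu=0,\]
so $\mu(\{\phi>0\})=\mu\bigl(\bigcup_k\{\phi>1/k\}\bigr)=0$. Therefore $\phi=0$ $\mu$-a.e., and consequently $0\le f\le\phi=0$ $\mu$-a.e.

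\textbf{Remaining issues.} The only subtlety is that $f$ itself need not be measurable, and may be defined only $\mu$-a.e. The argument never uses measurability of $f$. Its only consequence is that the statement "$f=0$ $\mu$-a.e." should be read as "$\{f\neq0\}$ is contained in a $\mu$-null set", which is exactly what we obtain, since $\{f\neq0\}\subseteq\{\phi>0\}\cup\bigcup_n N_n$. I expect no real obstacle. The one point to handle carefully is taking the countable union of the exceptional null sets $N_n$ so that a single null set works for all $n$ at once.
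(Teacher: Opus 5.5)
Your proof is correct and is the routine unwinding of Definition~\ref{upper integration} the paper has in mind when it says this lemma ``follows immediately'' (no written proof is given): a single null set $\bigcup_n N_n$, the measurable majorant $\phi=\inf_n\phi_n$ of integral zero, then Markov's inequality. Your reading of the conclusion as ``$\{f\neq0\}$ is contained in a $\mu$-null set'', with no measurability of $f$ needed, is also the right one.
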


\begin{Lemma}\label{flipping limit}
  Let $(X, \mu)$ be a measure space and $f_{n}:X\to [0, \infty]$ be a monotone sequence of functions, i.e. $0\le f_{1}(x)\le f_{2}(x)\dots$ for $\mu$-a.e. $x\in X$. Then, set a function $f$ defined $\mu$-a.e. on $X$ as $f(x)\coloneq\displaystyle\lim_{n\to\infty}f_{n}(x)$, and we have
  \[\displaystyle\lim_{n\to\infty}\int_{X}^{\ast}f_{n}d\mu=\int_{X}^{\ast}fd\mu.\]
\end{Lemma}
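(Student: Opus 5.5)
We prove Lemma \ref{flipping limit}; Lemma \ref{zero integration imply zero a.e.} is immediate. If $f$ has zero upper integral, then for every $n$ there is a measurable $\phi_n\ge f$ a.e. with $\int\phi_n\,d\mu<2^{-n}$. Put $\phi:=\inf_n\phi_n$. It is measurable, satisfies $\phi\ge f$ a.e., and has $\int\phi\,d\mu=0$, so $\phi=0$ a.e. Hence $f=0$ a.e.

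For Lemma \ref{flipping limit} we show two inequalities.

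\textbf{The inequality $\le$.} Since $f_n\le f_{n+1}\le f$ a.e., every measurable majorant of $f$ is also a majorant of each $f_n$ and $f_{n+1}$. Hence the sequence $\int^\ast f_n\,d\mu$ is non-decreasing, its limit $L$ exists in $[0,\infty]$, and $L\le\int^\ast f\,d\mu$.

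\textbf{The inequality $\ge$.} We may assume $L<\infty$.

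1. For each $n$, choose a measurable $\phi_n\ge f_n$ a.e. with
\[\int\phi_n\,d\mu\le\int^\ast f_n\,d\mu+\tfrac1n .\]
These $\phi_n$ need not be monotone, so the monotone convergence theorem does not apply to them directly.

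2. Set $\psi_n:=\inf_{m\ge n}\phi_m$. Each $\psi_n$ is measurable and non-decreasing in $n$. For $m\ge n$ we have $\phi_m\ge f_m\ge f_n$ a.e., and a countable union of null sets is null, so $\psi_n\ge f_n$ a.e.

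3. Let $\psi:=\lim_n\psi_n$. It is measurable and satisfies $\psi\ge\lim_n f_n=f$ a.e. By monotone convergence, $\int\psi\,d\mu=\lim_n\int\psi_n\,d\mu$.

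4. Since $\psi_n\le\phi_m$ for every $m\ge n$,
\[\int\psi_n\,d\mu\le\inf_{m\ge n}\Bigl(\int^\ast f_m\,d\mu+\tfrac1m\Bigr)\le L .\]
Letting $n\to\infty$ gives $\int\psi\,d\mu\le L$.

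5. Since $\psi$ is a measurable majorant of $f$ a.e., $\int^\ast f\,d\mu\le\int\psi\,d\mu\le L$. This proves the reverse inequality.

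The only delicate point is the non-monotonicity of the chosen majorants $\phi_n$, which is handled by the tail infima $\psi_n$ in step 2.
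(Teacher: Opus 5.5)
Your argument is correct. The paper gives no proof of this lemma (it only remarks that these upper-integral lemmas follow immediately), and your construction of tail infima $\psi_n=\inf_{m\ge n}\phi_m$ of near-optimal majorants, followed by monotone convergence, is the standard way to supply one. One slip in the $\le$ step: monotonicity of $\int^\ast f_n\,d\mu$ follows because every majorant of $f_{n+1}$ is a majorant of $f_n$, not from majorants of $f$, which instead give the bound $L\le\int^\ast f\,d\mu$.
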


\begin{Lemma}\label{estimation by step functions}
  Let $(X, \mu)$ be a measure space and $f:X\to[0, \infty]$ be a any function. We call $\phi:X\to [0, \infty]$ a step function if we find $a_{i}\in(0, \infty]$ and measurable $A_{i}\subseteq X$ such that $\phi=\displaystyle\sum_{i=1}^{\infty}a_{i}\chi_{A_{i}}$. Then, we have
  \[\int_{X}^{\ast}fd\mu=\inf\left\{\int_{X}\phi d \mu\;\Bigg{|}\;\text{$\phi$ is a step function, }0\le f(x)\le \phi(x) \text{ $\mu$-a.e. } x\in X\right\}.\]
\end{Lemma}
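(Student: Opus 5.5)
The plan is to prove the two inequalities separately. The inequality ``$\le$'' follows from the fact that step functions are admissible in Definition \ref{upper integration}. The inequality ``$\ge$'' follows by approximating an arbitrary measurable majorant $\phi$ from above by a step function whose integral exceeds $\int_X\phi\,d\mu$ by at most a factor $(1+\epsilon)$.

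For ``$\le$'': let $\phi=\sum_{i=1}^{\infty}a_i\chi_{A_i}$ with $a_i\in(0,\infty]$ and each $A_i$ measurable. Each partial sum is measurable, so $\phi$ is a pointwise increasing limit of measurable functions and is therefore measurable with values in $[0,\infty]$. Hence every step function $\phi$ with $f\le\phi$ $\mu$-a.e. is one of the competitors in the infimum defining $\int_X^{\ast}f\,d\mu$. The infimum over the smaller class of step functions is therefore at least $\int_X^{\ast}f\,d\mu$.

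For ``$\ge$'': fix a measurable $\phi$ with $0\le f\le\phi$ $\mu$-a.e. We may assume $\int_X\phi\,d\mu<\infty$, since otherwise there is nothing to prove. Then $A_\infty\coloneq\{\phi=\infty\}$ is measurable with $\mu(A_\infty)=0$. Fix $\epsilon>0$ and, for $k\in\mathbb{Z}$, set
\[A_k\coloneq\{x\in X\;|\;(1+\epsilon)^{k-1}<\phi(x)\le(1+\epsilon)^k\},\qquad a_k\coloneq(1+\epsilon)^k .\]
Define
\[\psi\coloneq\infty\cdot\chi_{A_\infty}+\sum_{k\in\mathbb{Z}}a_k\chi_{A_k}.\]
After re-indexing the countable family $\{A_\infty\}\cup\{A_k\}_{k\in\mathbb{Z}}$ by $\mathbb{N}$, $\psi$ is a step function in the sense of the lemma, because all coefficients lie in $(0,\infty]$. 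The sets $A_k$ and $A_\infty$ partition $\{\phi>0\}$. On each $A_k$ we have $\phi\le\psi=a_k<(1+\epsilon)\phi$, and on $A_\infty$ both $\phi$ and $\psi$ equal $\infty$. On $\{\phi=0\}$ we have $\psi=0$. Thus $f\le\phi\le\psi$ $\mu$-a.e., and by monotone convergence together with $\infty\cdot\mu(A_\infty)=0$,
\[\int_X\psi\,d\mu=\sum_{k\in\mathbb{Z}}a_k\mu(A_k)\le(1+\epsilon)\sum_{k\in\mathbb{Z}}\int_{A_k}\phi\,d\mu\le(1+\epsilon)\int_X\phi\,d\mu .\]
Taking the infimum over step functions, then letting $\epsilon\to0$, then taking the infimum over measurable $\phi$ gives ``$\ge$''.

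I expect no genuine obstacle. The only points needing care are bookkeeping ones: the set $\{\phi=\infty\}$ must be handled using the convention $\infty\cdot 0=0$, and the zero set of $\phi$ must be left uncovered so that every coefficient stays strictly positive, as the definition of a step function requires.
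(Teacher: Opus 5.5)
Your proof is correct and complete. For ``$\le$'' you correctly observe that step functions are measurable, so they are admissible majorants. For ``$\ge$'' the geometric slicing $A_k=\{(1+\epsilon)^{k-1}<\phi\le(1+\epsilon)^k\}$ does the work: together with the separate handling of the null set $\{\phi=\infty\}$ and leaving $\{\phi=0\}$ uncovered, it yields a step majorant $\psi$ with $\int_X\psi\,d\mu\le(1+\epsilon)\int_X\phi\,d\mu$.

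The paper gives no argument here; it only says the lemma follows immediately. So there is no competing proof to compare with. The one remark worth making is that the statement is even more immediate than your $\epsilon$-approximation suggests.

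Every measurable $\phi:X\to[0,\infty]$ is already a step function in the paper's sense. Set $A_1=\{\phi\ge 1\}$ and inductively $A_k=\{\phi\ge \frac1k+\sum_{j<k}\frac1j\chi_{A_j}\}$. Then $\phi=\sum_{k=1}^{\infty}\frac1k\chi_{A_k}$ exactly:
\begin{itemize}
\item Points with $\phi=\infty$ lie in every $A_k$, so the series diverges there as well.
\item A point with $\phi(x)<\infty$ must miss infinitely many $A_k$, since otherwise the partial sums, which stay below $\phi(x)$, would diverge.
\item At each $k$ with $x\notin A_k$, the remainder $\phi(x)-\sum_{j<k}\frac1j\chi_{A_j}(x)$ lies in $[0,\frac1k)$, so it tends to $0$ along these $k$.
\end{itemize}

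Hence the two infima in the lemma run over the same class of functions and coincide trivially. This route needs no $\epsilon$, no reduction to $\int_X\phi\,d\mu<\infty$, and no use of the convention $\infty\cdot 0=0$.

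Your construction does buy something the exact representation does not: pairwise disjoint sets and the pointwise control $\phi\le\psi<(1+\epsilon)\phi$ on $\{0<\phi<\infty\}$. However, the paper's later use of the lemma, in the proof of Theorem~\ref{equality of integration}, needs neither. There one only needs some step majorant with $\sum_i a_i\mathcal{V}^{s}_{d}(A_i)$ close to the upper integral.
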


\begin{Definition} [Causal weighted integration]\label{Causal weighted integral}
Let $(X, d, \le, \ll, \tau)$ be a Lorentzian pre-length space, $s\ge0$, and $f:X\to [0, \infty]$. For $\delta>0$, we define a \textit{causal weighted $\delta$-covering $\{(a_{i}, J_{i})\}_{i=1}^{\infty}$} of $f$ as a countably infinite pairs of a $a_{i}\in(0, \infty]$ and a causal diamond $J_{i}\in \mathcal{J}$ such that
\[\left\{ \begin{aligned} 
  & (1) \;\textrm{diam}_{d}(J_{i})\le \delta,\\
  & (2) \;f\le \sum_{i=1}^{\infty}a_{i}\chi_{J_{i}}.
             \end{aligned} 
             \right.\]
Moreover, we set
\[\int_{X}^{\bullet}fd\mathcal{V}^{s}_{d, \delta}\coloneq\inf\left\{\displaystyle\sum_{i=1}^{\infty}a_{i}\rho_{s}(J_{i})\;\Bigg{|}\;\{(a_{i}, J_{i})\}_{i=1}^{\infty} \textrm{ is a causal weighted $\delta$-covering of $f$}\right\},\]
with $\inf\emptyset=\infty$. Notice that the above formulation is non-increasing in $\delta$, and we define the \textit{causal weighted integral of $f$} for $\mathcal{V}^{s}_{d}$ as
\[\int_{X}^{\bullet}fd\mathcal{V}^{s}_{d}\coloneq \displaystyle\lim_{\delta\to 0}\int_{X}^{\bullet}fd\mathcal{V}^{s}_{d,\delta}.\]
\end{Definition}

Here, notice that for non-negative functions, $f$ and $g$ on $X$ with $f\le g$, causal weighted integration of $g$ is larger than that of $f$.

\section{Lorentzian coarea inequality for weighted causal integration}

In this section, we prove Lorentzian coarea inequality with regard to the causal weighted integration. 
\begin{Theorem}\label{Lcoarea inequality for causal weighted integration} 
  Let $X$ and $Y$ be Lorentzian pre-length spaces, $U:X\to Y$ be a uniformly $d$-controlling and timelike Lipschitz map, $0\le t\le s <\infty$, and $E\subseteq X$. Then, we have
  \[\displaystyle\lim_{\delta\to0} \int_{Y}^{\bullet}\mathcal{V}_{d_{X},\delta}^{s-t}(U^{-1}(y)\cap E)d\mathcal{V}^{t}_{d_{Y}}\le (\mathrm{TLip}U)^{t}\frac{\omega_{s-t}\omega_{t}}{\omega_{s}} \mathcal{V}^{s}_{d_{X}}(E).\]
\end{Theorem}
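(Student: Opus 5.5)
Following the Esmayli--Haj\l asz strategy, the plan is to cover $E$ efficiently by causal diamonds and let each diamond $J_i$ play two roles. It serves as a piece of a covering of the fibres $U^{-1}(y)\cap E$, and it produces a weighted diamond $(a_i,J(U(p_i),U(q_i)))$ in $Y$.

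First I dispose of trivial cases. If $\mathcal{V}^{s}_{d_X}(E)=\infty$ there is nothing to prove. Otherwise fix $\delta>0$, $\epsilon>0$ and $\lambda>\mathrm{TLip}U$. Choose a causal $\delta$-covering $J_i=J(p_i,q_i)$ of $E$ with
\[\sum_i \rho_s(J_i)\le \mathcal{V}^{s}_{d_X,\delta}(E)+\epsilon\le \mathcal{V}^{s}_{d_X}(E)+\epsilon.\]
In particular every $\tau_X(p_i,q_i)<\infty$. I discard empty members and keep only those meeting $E$.

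Next comes the fibre estimate. For each $y\in Y$, the family $\{J_i : J_i\cap U^{-1}(y)\neq\emptyset\}$ is a causal $\delta$-covering of $U^{-1}(y)\cap E$. Hence
\[\mathcal{V}^{s-t}_{d_X,\delta}(U^{-1}(y)\cap E)\le \sum_i \omega_{s-t}\tau_X(p_i,q_i)^{s-t}\chi_{U(J_i)}(y).\]
Since $U$ is causality preserving, $U(J_i)\subseteq J(U(p_i),U(q_i))=:K_i$. Since $U$ is uniformly $d$-controlling, $\mathrm{diam}_{d_Y}(K_i)<\eta(\delta)$. Set $a_i:=\omega_{s-t}\tau_X(p_i,q_i)^{s-t}$.

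Two issues need care. When $a_i=0$ (only possible when $s=t$ and $p_i,q_i$ are null related, or when $\tau_X(p_i,q_i)=0$ with $s>t$), the definition requires $a_i\in(0,\infty]$. I simply drop such terms when $s>t$, since their contribution vanishes. When $s=t$, $a_i=\omega_0$ is positive, so no term is lost. Also, $K_i$ may fail to lie in $\mathcal{J}$ if $U(p_i)=U(q_i)$. In that case $K_i$ is essentially $\{U(p_i)\}$ (or degenerate), and I use the convention that such diamonds are replaced by a point, treated as having $\rho_t=0$ when $\tau=0$. If that convention is unavailable, I replace $K_i$ by a nearby diamond with small $\tau$, paying an extra $\epsilon 2^{-i}$; this is the fiddliest step. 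Modulo this, $\{(a_i,K_i)\}$ is a causal weighted $\eta(\delta)$-covering of $y\mapsto \mathcal{V}^{s-t}_{d_X,\delta}(U^{-1}(y)\cap E)$.

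Then I estimate. Using $\tau_Y(U(p_i),U(q_i))\le\lambda\tau_X(p_i,q_i)$,
\[\int_Y^{\bullet}\mathcal{V}^{s-t}_{d_X,\delta}(U^{-1}(y)\cap E)\,d\mathcal{V}^t_{d_Y,\eta(\delta)}\le \sum_i \omega_{s-t}\omega_t\lambda^t\tau_X(p_i,q_i)^{s}=\lambda^t\frac{\omega_{s-t}\omega_t}{\omega_s}\sum_i\rho_s(J_i).\]
This is at most $\lambda^t\frac{\omega_{s-t}\omega_t}{\omega_s}(\mathcal{V}^s_{d_X}(E)+\epsilon)$.

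Finally I take limits. The integrand $\mathcal{V}^{s-t}_{d_X,\delta}$ increases as $\delta\downarrow0$, while the covering scale $\eta(\delta)\to0$. For fixed $\delta_0\le\delta$, monotonicity of $\int^\bullet$ in the integrand gives
\[\int^\bullet \mathcal{V}^{s-t}_{d_X,\delta_0}\,d\mathcal{V}^t_{d_Y,\eta(\delta)}\le \int^\bullet \mathcal{V}^{s-t}_{d_X,\delta}\,d\mathcal{V}^t_{d_Y,\eta(\delta)}.\]
Letting $\delta\to0$ first bounds $\int^\bullet \mathcal{V}^{s-t}_{d_X,\delta_0}\,d\mathcal{V}^t_{d_Y}$ by the right-hand side; then I let $\delta_0\to0$. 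Sending $\epsilon\to0$ and $\lambda\downarrow\mathrm{TLip}U$ concludes. I expect the main obstacle to be the degenerate-diamond issue with $U(p_i)=U(q_i)$ and zero weights $a_i$, since the rest is routine bookkeeping.
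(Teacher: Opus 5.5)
Your argument is the paper's argument with its two lemmas merged. The paper introduces $\Phi^{t,s-t}(U,E)$ and bounds it by $(\mathrm{TLip}U)^{t}\frac{\omega_{s-t}\omega_{t}}{\omega_{s}}\mathcal{V}^{s}_{d_X}(E)$ in Lemma 3.5. In Lemma 3.6 it uses exactly your device: a near-optimal covering $\{J(a_i,b_i)\}$ of $E$ serves both as a covering of every fibre and, through the image diamonds $J(U(a_i),U(b_i))$, as a weighted $\eta(\delta)$-covering on $Y$. In your last paragraph the condition must be $\delta\le\delta_0$, not $\delta_0\le\delta$, because $\mathcal{V}^{s-t}_{d_X,\delta}$ is non-increasing in $\delta$; this is how Lemma 3.6 is set up. Discarding zero weights is harmless.

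The step you flagged is a genuine gap, and neither of your repairs is available. By Definition 1.2, $\mathcal{J}_Y$ contains only $\emptyset$ and diamonds $J(x,y)$ with $x<y$, so $J(U(p_i),U(q_i))$ with $U(p_i)=U(q_i)$ cannot be admitted by convention. Nothing in the hypotheses supplies an element of $\mathcal{J}_Y$ of small diameter and small $\tau_Y$ containing $U(J_i)$.

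Without this, the statement is false. Take $X=\mathbb{R}^2_1$ with the Euclidean metric, $Y=\{y_0\}$ with the only reflexive relation as $\le_Y$, $\ll_Y=\emptyset$, $\tau_Y\equiv 0$, and $U$ constant. Take $s=2$, $t=1$, and $E$ a compact causal diamond. Then $U$ is causality preserving, uniformly $d$-controlling (with $\eta(r)=r$), and timelike Lipschitz with $\mathrm{TLip}U=0$. So the right-hand side is $0$, and it is finite for any admissible Lipschitz constant. The integrand $y\mapsto\mathcal{V}^{1}_{d_X,\delta}(E)$ is positive at $y_0$: each diamond of a causal $\delta$-covering has area $\frac12\tau^2\le\frac12\delta\tau$, so $\mathcal{V}^{1}_{d_X,\delta}(E)\ge\frac{2}{\delta}\,\mathrm{area}(E)$. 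But $\mathcal{J}_Y=\{\emptyset\}$, so this function has no causal weighted covering, and the left-hand side is $\infty$.

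The paper's proof of Lemma 3.6 has the same hole: it asserts without checking that $\{(\rho_{s-t}(J(a_i,b_i)),J(U(a_i),U(b_i)))\}$ is a weighted causal $\eta(\delta)$-covering. Closing it needs an extra hypothesis. The simplest is $U(p)\neq U(q)$ whenever $p<_X q$, which gives $U(p_i)<_Y U(q_i)$ and hence $K_i\in\mathcal{J}_Y$; under it your proof is complete as written. Your second repair would need something like the following: $t>0$, $Y$ causal (so $U(J_i)=\{U(p_i)\}$), and every point of $Y$ lying in elements of $\mathcal{J}_Y$ of arbitrarily small diameter and $\tau_Y$. Nothing of this kind is assumed.
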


Recall Definition 1.5 for the $d$-controlling and timelike Lipschitz properties.

\begin{Remark}
Notice that for a causality preserving map $U:X\to Y$, $U(J(p,q))\subseteq J(U(p), U(q))$ holds obviously though the reverse inclusion does not in general. Thus, the uniform continuity of $U$ does not imply that $U$ is uniformly $d$-controlling.
\end{Remark}

\begin{Remark}
We can find discussions of uniformly $d$-controlling maps in \cite{A} by the author though the notion of uniformly $d$-controlling map is introduced in this paper. In \cite{A}, conditions on causality preserving maps which suggest uniformly $d$-controlling property are addressed. If a causality preserving map $F$ between Lorentzian pre-length spaces $X$ and $Y$ is surjective, uniformly continuous, and preserves causality dually, i.e. $p\hspace{-3pt}\le_{X} \hspace{-4pt}q$ $\Leftrightarrow$ $F(p)\hspace{-3pt}\le_{Y}\hspace{-4pt} F(q)$, then $F$ is uniformly $d$-controlling. When the distance function on $Y$ is a null distance, we can weaken the condition, namely if a causality preserving map $F$ is uniformly continuous, it is uniformly $d$-controlling. Moreover, when the target space $Y$ is a generalized cone, uniformly continuous causality preserving map $F$ is uniformly $d$-controlling.
\end{Remark}

To make the proof clearer, we introduce the following notions.

\begin{Definition}
  Let $(X, d_{X}, \le_{X}, \ll_{X}, \tau_{X})$ and $(Y, d_{Y}, \le_{Y}, \ll_{Y}, \tau_{Y})$ be Lorentzian pre-length spaces, $E\subseteq X$, and $U:X\to Y$ be a causality preserving
map. Let $s, t\in[0,\infty)$ and $\delta\in(0,\infty]$. Then, we define
\begin{multline*}
  \Phi^{s,t}_{\delta}(U,E)\coloneq\inf\Bigg{\{}\sum_{i=1}^{\infty}\rho_{s}(J(U(a_{i}), U(b_{i})))\rho_{t}(J(a_{i}, b_{i}))\;\\\Bigg{|}\;\{J(a_{i}, b_{i})\}_{i=1}^{\infty}\textrm{ is a causal $\delta$-covering of $E$}\Bigg{\}},
  \end{multline*}
with $\inf\emptyset =\infty$. Notice that $\Phi^{s,t}_{\delta}(U,E)$ is non-increasing in $\delta$, and then we define
\[\Phi^{s,t}(U,E)\coloneq\displaystyle\lim_{\delta\to0}\Phi^{s, t}_{\delta}(U,E).\]
\end{Definition}
Then, the lemmas below conclude Theorem \ref{Lcoarea inequality for causal weighted integration}.
\begin{Lemma}\label{step1 of A}
  Let $(X, d_{X}, \le_{X}, \ll_{X}, \tau_{X})$ and $(Y, d_{Y}, \le_{Y}, \ll_{Y}, \tau_{Y})$ be Lorentzian pre-length spaces, $E\subseteq X$,  and $U:X\to Y$ be a causality preserving and timelike Lipschitz map. Let $s,t\in [0, \infty)$. Then, we have
  \[\Phi^{s,t}(U,E)\le (\mathrm{TLip}U)^{s}\frac{\omega_{s}\omega_{t}}{\omega_{s+t}} \mathcal{V}^{s+t}_{d_{X}}(E).\]
\end{Lemma}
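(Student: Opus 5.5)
The plan is a direct termwise comparison. Every causal $\delta$-covering of $E$ is admissible both in the infimum defining $\Phi^{s,t}_{\delta}(U,E)$ and in the one defining $\mathcal{V}^{s+t}_{d_X,\delta}(E)$. So it suffices to bound each summand $\rho_{s}(J(U(a_i),U(b_i)))\,\rho_{t}(J(a_i,b_i))$ by a fixed multiple of $\rho_{s+t}(J(a_i,b_i))$. We then sum, take the infimum over coverings, and let $\delta\to0$. No property of $U$ beyond causality preservation and the timelike Lipschitz bound is needed; in particular the uniformly $d$-controlling property plays no role here.

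\emph{Step 1 (pointwise bound).} Fix any $\lambda>0$ with $\tau_Y(U(p),U(q))\le\lambda\tau_X(p,q)$ for all $p\le_X q$. Let $J(a,b)\in\mathcal{J}$ with $a<b$; since $U$ is causality preserving, $U(a)\le_Y U(b)$.
\begin{itemize}
\item If $\tau_X(a,b)<\infty$, then $\tau_Y(U(a),U(b))<\infty$, and
\[
\rho_s(J(U(a),U(b)))\,\rho_t(J(a,b))=\omega_s\omega_t\,\tau_Y(U(a),U(b))^s\,\tau_X(a,b)^t\le\lambda^s\omega_s\omega_t\,\tau_X(a,b)^{s+t}=\lambda^s\frac{\omega_s\omega_t}{\omega_{s+t}}\rho_{s+t}(J(a,b)).
\]
\item If $\tau_X(a,b)=\infty$, the right-hand side is $\infty$ and the inequality is trivial.
\end{itemize}

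Two degenerate situations need to be settled by convention. First, if $U(a)=U(b)$, the diamond $J(U(a),U(b))$ is not formally in $\mathcal{J}$. I would read $\rho_s(J(U(a),U(b)))$ as $\omega_s\tau_Y(U(a),U(b))^s$, which is how the definition of $\Phi$ is intended. Second, when $s=0$ one has $\rho_0\equiv\omega_0$ and $\lambda^0=1$, so the bound is an identity. I must check that $0^0$ and $0\cdot\infty$ conventions are handled consistently here, and that $\omega_0$ is finite and positive; from the formula, $\omega_0=\pi^{-1/2}\cdot 2/\Gamma(1/2)$.

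\emph{Step 2 (summation and limits).} Let $\{J(a_i,b_i)\}$ be any causal $\delta$-covering of $E$. Summing Step 1 gives
\[
\Phi^{s,t}_\delta(U,E)\le\lambda^s\frac{\omega_s\omega_t}{\omega_{s+t}}\sum_i\rho_{s+t}(J(a_i,b_i)).
\]
Taking the infimum over such coverings gives $\Phi^{s,t}_\delta(U,E)\le\lambda^s\frac{\omega_s\omega_t}{\omega_{s+t}}\mathcal{V}^{s+t}_{d_X,\delta}(E)$. This also covers the case with no coverings, where both sides are $\infty$. Letting $\delta\to0$ gives the same inequality for $\Phi^{s,t}$ and $\mathcal{V}^{s+t}_{d_X}$.

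Finally, let $\lambda\downarrow\mathrm{TLip}\,U$. For $s>0$ this is continuity of $\lambda\mapsto\lambda^s$, and it also covers $\mathrm{TLip}\,U=0$. The only care needed is when $\mathcal{V}^{s+t}_{d_X}(E)=\infty$ (trivial) or when $\mathrm{TLip}\,U=0$ and $\mathcal{V}^{s+t}_{d_X}(E)<\infty$, where the limit is simply $0$.

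I expect no real obstacle. The only delicate point is the bookkeeping of the degenerate and infinite values in Step 1, namely the null-related or equal images and the $s=0$ case.
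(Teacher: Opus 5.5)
Your proposal is correct and is essentially the paper's proof. The paper also bounds each term $\rho_s(J(U(a_i),U(b_i)))\,\rho_t(J(a_i,b_i))$ by $(\mathrm{TLip}U)^s\frac{\omega_s\omega_t}{\omega_{s+t}}\rho_{s+t}(J(a_i,b_i))$ using causality preservation and the timelike Lipschitz bound, then sums, takes the infimum over causal $\delta$-coverings, and lets $\delta\to0$. Working with an admissible $\lambda>0$ and letting $\lambda\downarrow\mathrm{TLip}U$ only makes the $\tau_X=\infty$ case slightly cleaner than the paper's direct use of $\mathrm{TLip}U$.

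One side correction: your value of $\omega_0$ drops the factor $N$ in the denominator of the paper's formula, so $\omega_0$ is not finite as written. This is a defect of the paper's definitions at the endpoint, not of your argument, which is unaffected for $s,t>0$.
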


\begin{proof}
  We can assume $\mathcal{V}^{s+t}_{d_{X}}(E)<\infty$. Take $\delta>0$ arbitrarily. Let $\{J(a_{i}, b_{i})\}_{i=1}^{\infty}$ be a causal $\delta$-covering of $E$. Then, we have
  \[\begin{aligned}
  \Phi^{s,t}_{\delta}(U, E)&\le \sum_{i=1}^{\infty}\rho_{s}(J(U(a_{i}), U(b_{i})))\rho_{t}(J(a_{i}, b_{i}))\\
                           &\le \sum_{i=1}^{\infty}\omega_{s}(\mathrm{TLip}U)^{s}\tau_{X}(a_{i}, b_{i})^{s}\omega_{t}\tau_{X}(a_{i}, b_{i})^{t}\\
                           &= (\mathrm{TLip}U)^{s}\frac{\omega_{s}\omega_{t}}{\omega_{s+t}}\sum_{i=1}^{\infty}\rho_{s+t}(J(a_{i}, b_{i})).
  \end{aligned}\]
The assumption that $U$ is causality preserving and timelike Lipschitz is required to get the inequality in the second line above. Taking the infimum over all causal $\delta$-coverings of $E$, we get
  \[\Phi^{s,t}_{\delta}(U, E)\le (\mathrm{TLip}U)^{s}\frac{\omega_{s}\omega_{t}}{\omega_{s+t}}\mathcal{V}^{s+t}_{d_{X},\delta}(E).\]
Letting $\delta\to0$, we have the desired inequality.
\end{proof}

\begin{Lemma}\label{step2 of A}
  Let $X$ and $Y$ be Lorentzian pre-length spaces, and $U:X\to Y$ be a uniformly $d$-controlling map. Let $0\le t\le s <\infty$ and $E\subseteq X$. Then, we have
  \[\int_{Y}^{\bullet}\mathcal{V}_{d_{X},\delta_{0}}^{s-t}(U^{-1}(y)\cap E)d\mathcal{V}^{t}_{d_{Y}}(y)\le \Phi^{t, s-t}(U, E)\]
  for any $\delta_{0}>0$.
\end{Lemma}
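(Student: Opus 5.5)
We take a causal covering of $E$ that nearly attains $\Phi^{t,s-t}_{\delta}(U,E)$ and turn it into a causal weighted covering of the function $y\mapsto \mathcal{V}^{s-t}_{d_X,\delta_0}(U^{-1}(y)\cap E)$ on $Y$, with diamonds of small $d_Y$-diameter controlled by $\eta$. We may assume $\Phi^{t,s-t}(U,E)<\infty$.

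Fix $\delta_0>0$ and let $\eta$ be the function from the uniformly $d$-controlling property of $U$. Take $\delta\in(0,\delta_0]$ and a causal $\delta$-covering $\{J(a_i,b_i)\}_{i=1}^\infty$ of $E$ with
\[\sum_{i}\rho_t(J(U(a_i),U(b_i)))\,\rho_{s-t}(J(a_i,b_i))\le \Phi^{t,s-t}_\delta(U,E)+\epsilon .\]
Put $c_i\coloneq\rho_{s-t}(J(a_i,b_i))$ and $K_i\coloneq J(U(a_i),U(b_i))$. Since $U$ is causality preserving, $U(a_i)\le_Y U(b_i)$. Since $\mathrm{diam}_{d_X}J(a_i,b_i)<\delta$, we have $\mathrm{diam}_{d_Y}K_i<\eta(\delta)$.

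There are two degenerate cases. If $U(a_i)=U(b_i)$, then $K_i$ is not in $\mathcal{J}$ (it may be a point). If $c_i=0$, the weight is not in $(0,\infty]$. In both cases we handle the term separately: drop it if $c_i=0$, and replace it by an arbitrarily small diamond containing the point with the diamond's $\rho_t$ small, or else note the contribution is zero. I expect this bookkeeping to be the main obstacle, together with the convention for $t=0$, where $\rho_0\equiv\omega_0$ is constant. In practice I would first try to argue that such degenerate diamonds can be absorbed with an $\epsilon 2^{-i}$ cost, accepting a small error that vanishes at the end.

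The key pointwise claim is that for every $y\in Y$,
\[\mathcal{V}^{s-t}_{d_X,\delta_0}(U^{-1}(y)\cap E)\le \sum_{i} c_i\,\chi_{K_i}(y).\]
To see this, note that if $x\in U^{-1}(y)\cap E$ and $x\in J(a_i,b_i)$, then $y=U(x)\in U(J(a_i,b_i))\subseteq K_i$. So the diamonds $J(a_i,b_i)$ with $y\in K_i$ cover $U^{-1}(y)\cap E$. Each has diameter $<\delta\le\delta_0$, so they form a causal $\delta_0$-covering of $U^{-1}(y)\cap E$, padded with $\emptyset$ if the family is finite. By the definition of $\mathcal{V}^{s-t}_{d_X,\delta_0}$, the left side is at most $\sum_{i:\,y\in K_i}c_i$.

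Hence $\{(c_i,K_i)\}$ is a causal weighted $\eta(\delta)$-covering of this function. By Definition \ref{Causal weighted integral},
\[\int_Y^{\bullet}\mathcal{V}^{s-t}_{d_X,\delta_0}(U^{-1}(y)\cap E)\,d\mathcal{V}^t_{d_Y,\eta(\delta)}\le\sum_i c_i\rho_t(K_i)\le \Phi^{t,s-t}_\delta(U,E)+\epsilon.\]
We let $\epsilon\to0$ and then $\delta\to0$. Then $\eta(\delta)\to0$, the left side tends to the causal weighted integral with respect to $\mathcal{V}^t_{d_Y}$, and the right side tends to $\Phi^{t,s-t}(U,E)$, which proves the lemma.
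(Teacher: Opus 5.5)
Your argument is the paper's own proof: you take a near-optimal causal $\delta$-covering for $\Phi^{t,s-t}_\delta(U,E)$, get the pointwise bound from $U(J(a_i,b_i))\subseteq J(U(a_i),U(b_i))$, control $d_Y$-diameters by $\eta(\delta)$, and let $\epsilon\to0$ and then $\delta\to0$. Dropping the terms with $c_i=0$ is also fine. The gap is the degenerate case you flag, $U(a_i)=U(b_i)=z$, and your repair does not work in general. It needs $J(z,z)$ to lie inside members of $\mathcal{J}$ of diameter $<\eta(\delta)$ and arbitrarily small $\rho_t$. Nothing in the definition of a Lorentzian pre-length space provides such diamonds. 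The paper's proof has the same gap, unacknowledged: it calls $\{(\rho_{s-t}(J(a_i,b_i)),J(U(a_i),U(b_i)))\}$ a causal weighted covering although $J(z,z)\notin\mathcal{J}$.

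In fact the statement is false without an extra hypothesis.
\begin{itemize}
\item Let $Y=\{z_0\}$ with $\le_Y$ the identity, $\ll_Y=\emptyset$ and $\tau_Y\equiv0$. This is a Lorentzian pre-length space with $\mathcal{J}_Y=\{\emptyset\}$.
\item Let $X=\mathbb{R}^{1,1}$ with the Euclidean distance and $U\equiv z_0$. Then $U$ is uniformly $d$-controlling with $\eta(\delta)=\delta$.
\item Take $t=1$, $s=2$ and $E=[0,1]^2$. Every term of $\Phi^{1,1}_\delta(U,E)$ carries the factor $\rho_1(J(z_0,z_0))=\tau_Y(z_0,z_0)=0$, so $\Phi^{1,1}(U,E)=0$.
\item A diamond of Euclidean diameter $<\delta_0$ has area $\tau^2/2\le\delta_0\tau/2$. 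Hence $f(z_0)=\mathcal{V}^1_{d_X,\delta_0}(E)\ge 2/\delta_0>0$.
\item Any $\sum_i a_i\chi_{J_i}$ with $J_i\in\mathcal{J}_Y$ vanishes identically. So $f$ has no causal weighted covering, and the left-hand side is $\infty$.
\end{itemize}

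So the degenerate case cannot be handled by bookkeeping; you need an additional assumption. One option is that $U(p)\neq U(q)$ whenever $p<_X q$; then every $K_i$ lies in $\mathcal{J}$ and your argument is complete as written. Another is a hypothesis on $Y$ guaranteeing that each $J(z,z)$ can be enclosed in members of $\mathcal{J}$ of arbitrarily small diameter and $\rho_t$, which is exactly what your $\epsilon 2^{-i}$ absorption would need.
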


\begin{proof}
  We can assume $\Phi^{t, s-t}(U, E)<\infty$. Let $\delta_{0}>0$. Let $0<\delta\le\delta_{0}$ and $\epsilon>0$, take a causal $\delta$-covering of $E$, $\{J(a_{i}, b_{i})\}_{i=1}^{\infty}$ such that 
\[\sum_{i=1}^{\infty}\rho_{t}(J(U(a_{i}), U(b_{i})))\rho_{s-t}(J(a_{i}, b_{i})) < \Phi_{\delta}^{t, s-t}(U, E)+\epsilon.\]
Then, we also have 
\[\mathcal{V}_{d_{X},\delta_{0}}^{s-t}(U^{-1}(y)\cap E)\le \sum_{i=1}^{\infty}\rho_{s-t}(J(a_{i}, b_{i}))\cdot\chi_{J(U(a_{i}), U(b_{i}))}(y).\]
Indeed, we see $U^{-1}(y)\cap E\subseteq \bigcup\{J(a_{i}, b_{i})\;|\;y\in J(U(a_{i}), U(b_{i}))\}$. Moreover, since $U$ is uniformly $d$-controlling, we can get a function $\eta:(0, \infty)\to (0, \infty)$ such that
\[\left\{ \begin{aligned} & (1) \;\mathrm{diam_{d_{X}}}(J(a, b))<s\Rightarrow \mathrm{diam_{d_{Y}}}(J(U(a), U(b)))<\eta(s), \\
                          & (2) \;\eta(s)\to 0 \text{\;as\;} s\to 0\;. \\
             \end{aligned} 
             \right.
             \]
Notice that $\{\rho_{s-t}(J(a_{i}, b_{i})), J(U(a_{i}), U(b_{i}))\}$ forms a weighted causal $\eta(\delta)$-covering of the map, $y \mapsto \mathcal{V}_{d_{X},\delta_{0}}^{s-t}(U^{-1}(y)\cap E)$. Therefore, we have
\[\begin{aligned}
  \int_{Y}^{\bullet}\mathcal{V}_{d_{X},\delta_{0}}^{s-t}(U^{-1}(y)\cap E)d\mathcal{V}^{t}_{\eta(\delta)}   &\le\sum_{i=1}^{\infty} \rho_{s-t}(J(a_{i}, b_{i}))\rho_{t}(J(U(a_{i}), U(b_{i})))\\
  &<\Phi_{\delta}^{t, s-t}(U, E)+\epsilon.\\
  \end{aligned}\]
  Letting $\epsilon\to0$ first and $\delta\to0$ next, we can get the desired inequality.
\end{proof}
$\linebreak$
As a direct consequence of Lemmas $\ref{step1 of A}$ and $\ref{step2 of A}$, we get the desired conclusion, Theorem \ref{Lcoarea inequality for causal weighted integration}.

\section{Coincidence of the upper integration and the causal weighted integration}
In this section, we prove the coincidence of the upper integration and the causal weighted integration on Lorentzian pre-length spaces. 
\begin{Theorem}\label{equality of integration}
Let $(X, d, \le, \ll, \tau)$ be a Lorentzian pre-length space. Assume that $X$ satisfies the local causal enlargement property, $d$ is a proper metric, $\mathcal{V}^{s}_{d}(J(p,q))=0$ if $p$ and $q$ are null related, and any causal diamond in $X$ is closed. Let $s\ge0$ and assume that $\mathcal{V}^{s}_{d}$ is an outer regular measure and $\sigma$-finite on $X$. Then, for any $f:X\to [0, \infty]$, we have
\[\int_{X}^{\ast}fd\mathcal{V}^{s}_{d}=\int_{X}^{\bullet}fd\mathcal{V}^{s}_{d}.\]
\end{Theorem}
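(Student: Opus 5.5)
We prove the two inequalities separately, following the metric-space argument of Esmayli and Haj\l asz with causal diamonds in place of balls.

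\textbf{The inequality $\int^{\ast}\le\int^{\bullet}$.} This direction is the easy one and uses only that causal diamonds are closed, hence Borel and $\mathcal{V}^{s}_{d}$-measurable. Let $\{(a_{i},J_{i})\}$ be a causal weighted $\delta$-covering of $f$. Then $\phi=\sum_i a_i\chi_{J_i}$ is a measurable majorant of $f$, and
\[\int_{X}^{\ast}fd\mathcal{V}^{s}_{d}\le\sum_i a_i\mathcal{V}^{s}_{d}(J_i).\]
It therefore suffices to compare $\mathcal{V}^{s}_{d}(J_i)$ with $\rho_s(J_i)$. Since $J_i$ covers itself, we have $\mathcal{V}^{s}_{d,\delta'}(J_i)\le\rho_s(J_i)$ for every $\delta'>\mathrm{diam}_d(J_i)$. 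This does not immediately give $\mathcal{V}^{s}_{d}(J_i)\le\rho_s(J_i)$, because $\mathcal{V}^{s}_{d}$ is a limit as $\delta'\to0$.

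To handle this, I would reduce to $\mathcal{V}^{s}_{d,\delta}$-type quantities. The plan is to show that $\int^{\ast}$ against $\mathcal{V}^{s}_{d}$ is bounded by $\int^{\bullet}$ at level $\delta$, up to an error that vanishes as $\delta\to 0$.

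\begin{itemize}
\item If $\int^{\bullet}f\,d\mathcal{V}^{s}_{d}=\infty$, there is nothing to prove.
\item Otherwise, I would use Lemma \ref{estimation by step functions} together with the step-function form. Equivalently, I would compare $\sum_i a_i\mathcal{V}^s_d(J_i)$ with $\liminf$ of refined coverings: each $J_i$ can be re-covered by small diamonds with total $\rho_s$ close to $\mathcal{V}^s_d(J_i)$.
\end{itemize}

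Honestly, the cleaner route is the reverse viewpoint. Define $\nu(A)=\int^{\bullet}\chi_A$. This is the $\mathcal{V}^s_d$ outer measure itself, since a weighted covering of $\chi_A$ with the weights truncated at $1$ dominates a covering of $A$ via Chebyshev-type sets. Then show $\int^{\ast}$ is the smallest "integral-like" functional above it. So I would prove $\nu(A)=\mathcal{V}^s_d(A)$ first, and then pass to general $f$ by the layer-cake decomposition
\[f=\sum_k 2^{-k}\chi_{\{f>2^{-k}\cdots\}}.\]

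\textbf{The inequality $\int^{\bullet}\le\int^{\ast}$.} This is the main part. By Lemma \ref{estimation by step functions} and Lemma \ref{flipping limit}, together with $\sigma$-finiteness, it suffices to treat $\phi=\sum_i a_i\chi_{A_i}$ with $A_i$ measurable. By countable subadditivity of $\int^{\bullet}_{\delta}$ in the function, it then suffices to show
\[\int^{\bullet}\chi_A\,d\mathcal{V}^s_{d}\le\mathcal{V}^s_d(A)\]
for measurable $A$. We may also reduce $\phi$ to simple functions with finite measure, since the remaining reduction to the indicator case is routine.

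The difficulty is that a single $\delta$ must serve all pieces at once. For a general (infinite) sum, covering $A_i$ at scale $\delta$ gives $\int^{\bullet}_\delta\chi_{A_i}\le\mathcal{V}^s_{d,\delta}(A_i)\le\mathcal{V}^s_d(A_i)$. Summing gives $\int^{\bullet}_\delta\phi\le\sum_i a_i\mathcal{V}^s_d(A_i)$ for every $\delta$, so in fact this direction is direct and needs no extra hypothesis. The work therefore lies in the first direction.

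\textbf{The main obstacle: $\int^{\ast}\le\int^{\bullet}$.} Fix a weighted $\delta$-covering. We need
\[\int^{\ast}f\le\sum_i a_i\rho_s(J_i)+o(1)\quad\text{as }\delta\to0.\]
Here the hypotheses enter, with the goal of a Vitali-type covering argument (the covering lemma mentioned in the abstract).

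\begin{itemize}
\item Null-related diamonds have measure zero and can be discarded.
\item For timelike $J_i$ inside causal enlargement neighborhoods (which cover the proper space $X$ by countably many compact pieces), I would show that the enlargement properties (1)–(3) give a $5r$-type covering lemma for diamonds.
\item Combined with outer regularity, this yields a density estimate: for $\mathcal{V}^s_d$-a.e.\ point, small diamonds $J$ containing it satisfy $\mathcal{V}^s_d(J)\le \rho_s(J)(1+\epsilon)$ asymptotically.
\end{itemize}

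This would let us replace $\rho_s(J_i)$ by $\mathcal{V}^s_d(J_i)$ up to an error, and then
\[\int^{\ast}f\le\int\sum_i a_i\chi_{J_i}\,d\mathcal{V}^s_d\le(1+\epsilon)\sum_i a_i\rho_s(J_i).\]

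I expect this density/covering step to be the hardest. The approach I would try first is an Esmayli–Haj\l asz style argument: approximate $f$ by its truncations $\min(f,k)\chi_{K}$, with $K$ compact by $\sigma$-finiteness and properness, and apply Lemma \ref{flipping limit} to pass to the limit.
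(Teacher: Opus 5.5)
\textbf{Verdict: there is a genuine gap in the hard direction $\int^{\ast}\le\int^{\bullet}$.} Your argument for the other direction, $\int^{\bullet}\le\int^{\ast}$, is correct and is the paper's argument: you majorize $f$ by a step function $\sum_i a_i\chi_{A_i}$ and re-cover each $A_i$ at scale $\delta$ at cost close to $\mathcal V^s_{d,\delta}(A_i)$. For the hard direction you have the right ingredients, but the way you combine them fails. Your chain
\[\int_X^{\ast}f\,d\mathcal V^s_d\le\int_X\sum_i a_i\chi_{J_i}\,d\mathcal V^s_d\le(1+\epsilon)\sum_i a_i\rho_s(J_i)\]
for a fixed weighted $\delta$-covering needs $\mathcal V^s_d(J_i)\le(1+\epsilon)\rho_s(J_i)$ for every $i$. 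The density estimate (Lemma \ref{volume upper bound of causal diamonds}) gives much less: it holds only at $\mathcal V^s_d$-a.e.\ point $x$, and only for diamonds through $x$ contained in $B_d(x,\delta_x)$, where $\delta_x$ depends on $x$. No single $\delta$ works for all points. A diamond of your covering need not contain any point at which its size is admissible, so nothing bounds its measure by $\rho_s$.

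Your ``cleaner route'' does not repair this. The inequality $\mathcal V^s_d(A)\le\int^{\bullet}\chi_A\,d\mathcal V^s_d$ does not follow from truncating weights or from a Chebyshev argument. At a fixed scale a fractional covering can be strictly cheaper than any genuine one: take three sets, each containing two of three points, all with weight $\tfrac12$. That inequality is exactly the case $f=\chi_A$ of the hard direction. The layer-cake step would also need superadditivity of $\int^{\bullet}$, which is not available.

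\textbf{The missing idea} is an exhaustion that ties the density scale to the covering scale.
\begin{itemize}
\item First localize, so that a Borel hull $A$ of $\{f>0\}$ (Lemma \ref{estimate sets by Borel sets}) has finite measure and lies in a compact set at positive distance from its complement.
\item Let $W_j$ be the set of $x\in A$ such that $\mathcal V^s_d(A\cap J)\le(1+\epsilon)\rho_s(J)$ for every $J\in\mathcal J$ with $x\in J\subseteq B_d(x,j^{-1})$. The density lemma gives $A=N\cup\bigcup_j W_j$ with $N$ null and $W_j$ increasing.
\item Take a weighted $(2j)^{-1}$-covering $\{(a_i,J_i)\}$ of $f$ with cost at most $\int^{\bullet}f\,d\mathcal V^s_{d,(2j)^{-1}}+\epsilon$. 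Keep only the indices $I^j$ with $J_i\cap W_j\neq\emptyset$. Each such diamond lies in $B_d(x,j^{-1})$ for a point $x\in J_i\cap W_j$, and $f\chi_{W_j}\le\sum_{i\in I^j}a_i\chi_{A\cap J_i}$. Hence
\[\int_X^{\ast}f\chi_{W_j}\,d\mathcal V^s_d\le\sum_{i\in I^j}a_i\mathcal V^s_d(A\cap J_i)\le(1+\epsilon)\Bigl(\int_X^{\bullet}f\,d\mathcal V^s_{d,(2j)^{-1}}+\epsilon\Bigr).\]
\item Since $f\chi_{W_j}\uparrow f$ a.e., Lemma \ref{flipping limit} lets you send $j\to\infty$ and then $\epsilon\to0$. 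A second monotone limit then removes the localization.
\end{itemize}
It is this limit in $j$, not your truncation $\min(f,k)\chi_K$, that overcomes the non-uniformity of $\delta_x$. Without it your outline does not close.
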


To prove Theorem \ref{equality of integration}, we provide two lemmas. The prior one is a covering lemma regarding to causal diamonds, and the latter one is about estimation of the volume of causal diamond over compact sets. Moreover, we see that combining Theorem \ref{Lcoarea inequality for causal weighted integration} and Theorem \ref{equality of integration}, Theorem \ref{LEilenberg's inequality} holds.

\begin{Lemma}\label{causal covering lemma}
  Let $(X, d, \le, \ll, \tau)$ be a Lorentzian pre-length space satisfying the local causal enlargement property. Let $E\subseteq X$, and assume there exists a compact subset $A\subseteq X$ with $E\subseteq A$ and $\delta\coloneq d(E, A^{c})>0$. Then, we can find positive numbers $C>0$, $C_{1}\ge1$, and $C_{2}\ge1$, and we have the following:
  
Let $\kappa$ be any family of causal diamonds in $\mathcal{J}$ such that
  \[\left\{ \begin{aligned} 
  & (1) \;\textit{Any $J(p,q)$ in $\kappa$ satisfies $p\ll q$,} \\
  & (2) \;E\subseteq\displaystyle\bigcup_{J(p,q)\in\kappa}J(p,q), \\
  & (3) \;E\cap J(p,q)\not= \emptyset \mathrm{  \;\;for \;any \;}J(p,q)\in\kappa, \\
  & (4) \;\sup\{\mathrm{diam}_{d}(J(p,q))\;|\;J(p,q)\in \kappa\}<C. \\
             \end{aligned}
             \right.\tag{A}\\\]
Then any element $J(p,q)\in \kappa$ admits a causal enlargement $\hat{J}\coloneq J(\hat{p},\hat{q})$ such that $\mathrm{diam}_{d}J(\hat{p}, \hat{q})\le C_{1}\mathrm{diam}_{d}(J(p,q))$ and $\tau(\hat{p},\hat{q})\le C_{2}\tau(p,q)$. Moreover, we can get a subfamily $\kappa'\subseteq\kappa$ that is pairwise disjoint and satisfies
\[E\subseteq\displaystyle\bigcup_{J'\in\kappa'}\hat{J'}, \]
independently from the choice of such enlargement $\hat{J'}$ for each $J'\in\kappa'$. If in addition we assume that any causal diamond in $X$ is closed and $\kappa$ is a fine covering of $E$, for any finite subfamily $K\subseteq \kappa'$ we have
\[E\subseteq \displaystyle\bigcup_{J\in K}J\cup\displaystyle\bigcup_{J'\in\kappa'\setminus K}\hat{J'}.\]
\end{Lemma}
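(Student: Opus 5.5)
This is a Vitali-type covering argument: the local causal enlargement property stands in for the usual "$5r$-enlargement" of metric balls. The plan is to fix the constants by compactness, select the disjoint subfamily greedily by diameter, and then handle the finite-subfamily refinement with a closedness argument.

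\emph{Choice of constants.} For each $x\in A$ take a causal enlargement neighborhood $U_x$ with constants $C_1^x,C_2^x$. Choose $r_x>0$ with $B(x,2r_x)\subseteq U_x$. By compactness of $A$, finitely many balls $B(x_j,r_{x_j})$, $j=1,\dots,m$, cover $A$. Let $r>0$ be a Lebesgue number of this cover, and set
\[
C\coloneq \min\{\delta, r, \min_j r_{x_j}\}/2, \qquad C_1\coloneq\max_j C_1^{x_j}, \qquad C_2\coloneq\max_j C_2^{x_j}.
\]
Now let $J(p,q)\in\kappa$. It meets $E$ and has diameter $<C\le\delta/2$, so it lies in $A$. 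Its diameter is also less than $r$, so it lies in some ball $B(x_j,r_{x_j})\subseteq U_{x_j}$. Since $p\ll q$, Definition \ref{local enlargemennt property} produces an enlargement $\hat J$ with the stated bounds; enlarging a constant $C_i^{x}\ge 1$ only weakens (2) and (3).

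\emph{Selection of $\kappa'$.} Write $D\coloneq \sup_{J\in\kappa}\mathrm{diam}_d J<C$. Split $\kappa$ into the layers
\[
\kappa_k\coloneq\{J\in\kappa : D/2^{k+1}<\mathrm{diam}_d J\le D/2^k\},
\]
and put the diameter-zero diamonds in a separate last layer. Choose inductively a maximal pairwise disjoint subfamily $\kappa_k'\subseteq\kappa_k$ among the diamonds disjoint from everything chosen in earlier layers, using Zorn's lemma. Set $\kappa'\coloneq\bigcup_k\kappa'_k$.

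Every $J\in\kappa$ then meets some $J'\in\kappa'$ with $\mathrm{diam}_d J\le 2\,\mathrm{diam}_d J'$. Property (1) of the enlargement says $\hat{J'}$ contains every diamond meeting $J'$ whose diameter is at most $2\,\mathrm{diam}_d J'$, so $J\subseteq\hat{J'}$. Since $\kappa$ covers $E$, this gives $E\subseteq\bigcup_{J'\in\kappa'}\hat{J'}$. The argument uses only property (1), so it holds for whatever choice of enlargements is made.

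The diameter-zero diamonds are single points $\{p\}$ with $p\ll q$ and $p=q$. They need separate handling in the layer ordering. I would either treat them in a final layer, where disjointness means distinct points and the two-times comparison is automatic, or observe that property (1) with diameter zero still absorbs them.

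\emph{Finite subfamily refinement.} Now assume $\kappa$ is fine and causal diamonds are closed. Fix a finite $K\subseteq\kappa'$ and a point $x\in E\setminus\bigcup_{J\in K}J$.

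1. The finite union $F\coloneq\bigcup_{J\in K}J$ is closed, so there is $\epsilon>0$ with $B(x,\epsilon)\cap F=\emptyset$.
2. By fineness, pick $J\in\kappa$ with $x\in J$ and $\mathrm{diam}_d J<\epsilon$; then $J$ misses $F$.
3. By the selection step, $J$ meets some $J'\in\kappa'$ with $\mathrm{diam}_d J\le 2\,\mathrm{diam}_d J'$, and hence $J\subseteq\hat{J'}$.
4. Since $J$ misses every member of $K$, we have $J'\notin K$. Therefore $x\in\bigcup_{J'\in\kappa'\setminus K}\hat{J'}$.

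\emph{Main obstacle.} The delicate point is ensuring the selection really gives the factor $2$ in the meeting diameter. This comes from maximality within layers together with disjointness from earlier layers. The edge cases also need care: zero-diameter diamonds, and possibly uncountable $\kappa$. The latter is harmless here, since only maximality is used, not countability.
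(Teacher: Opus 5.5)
Your proposal is correct and follows essentially the same route as the paper: you choose the constants via a finite subcover of $A$ by enlargement neighborhoods and a Lebesgue number (capped by $\delta/2$), layer $\kappa$ dyadically by diameter with maximal disjoint subfamilies chosen layer by layer, and use the same closedness-plus-fineness argument for a finite $K$. Your separate treatment of diameter-zero diamonds is harmless but unnecessary, since every nonempty member of $\mathcal{J}$ is $J(x,y)$ with $x<y$, so it contains two distinct points and has positive diameter.
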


\begin{proof}
    First we show that there are constants $C>0$, $C_{1}\ge1$, and $C_{2}\ge1$ such that any $J\in\mathcal{J}$ satisfying $J\cap E\not=\emptyset$ and $\mathrm{diam}_{d}(J)<C$ admits a causal enlargement $\hat{J}$ which holds $\mathrm{diam}_{d}J(\hat{p}, \hat{q})\le C_{1}\mathrm{diam}_{d}(J(p,q))$ and $\tau(\hat{p},\hat{q})\le C_{2}\tau(p,q)$. Since we assume there exists a compact subset $A\subseteq X$ with $\delta= d(E, A^{c})>0$ namely $E \subseteq A$, assigning a causal enlargement neighborhood $U_{x}$ for each $x\in A$, we can get a finite subcovering $\{U_{x_{i}}\}_{i=1}^{M}$. Then, taking a Lebesgue number $C'>0$ for the covering $\{U_{x_{i}}\}_{i=1}^{M}$, we see that for any causal diamond $J\subseteq A$ with $\mathrm{diam}_{d}(J)<C'$ there exists a causal enlargement neighborhood $U_{x_{i}}$ of a point $x_{i}$ which includes $J$. Therefore, again since $\delta>0$, for $C\coloneq \min{(C', \frac{1}{2}\delta)}$, $C_{1}\coloneq\max\{C_{1}^{x_{i}}\}_{i=1}^{M}$, and $C_{2}\coloneq\max\{C_{2}^{x_{i}}\}_{i=1}^{M}$, any $J\subseteq A$ with $\mathrm{diam}_{d}(J)<C$ admits a causal enlargement $\hat{J}$ such that $\mathrm{diam}_{d}J(\hat{p}, \hat{q})\le C_{1}\mathrm{diam}_{d}(J(p,q))$ and $\tau(\hat{p},\hat{q})\le C_{2}\tau(p,q)$. 
  
  Let $\kappa$ be a covering of $E$ which consists of causal diamonds in $\mathcal{J}$ and satisfies (A) for $C>0$. Assign a causal enlargement $\hat{J}$ for each $J\in\kappa$. Let $R\coloneq \displaystyle\sup_{J\in\kappa}\{\mathrm{diam}_{d}(J)\}<C$, and for $j\in\mathbb{N}$ we set 
  \[\kappa_{j}\coloneq\left\lbrace J\in\kappa\;\left\vert\;\frac{R}{2^{j}}<\mathrm{diam}_{d}(J)\le \frac{R}{2^{j-1}}\right.\right\rbrace.\]
  Take a pairwise disjoint subfamily $\tilde{\kappa}_{1}\subseteq\kappa_{1}$ which is also maximal with respect to inclusion. Then, we take a maximal pairwise disjoint subfamily $\tilde{\kappa}_{2}\subseteq\kappa_{2}$ which consists of causal diamonds in $\{J\in \kappa_{2}\;|\;J\cap J'=\emptyset \text{ for any }J'\in \tilde{\kappa}_{1}\}$. Iterate this construction and we get a sequence $\{\tilde{\kappa}_{j}\}_{j=1}^{\infty}$ of subfamilies of $\kappa$ such that $\tilde{\kappa}_{j}$ satisfies
\[\left\{ \begin{aligned} 
  & (1) \;\textrm{$\tilde{\kappa}_{j}$ is pairwise disjoint}, \\
  & (2) \;J\in\tilde{\kappa}_{j}\Rightarrow J\cap J'=\emptyset \textrm{ for all $J'\in\tilde{\kappa}_{1}\cup\dotsb\cup\tilde{\kappa}_{j-1}$} \\
             \end{aligned}
             \right.\]
and is also maximal in $\kappa_{j}$ for all $j\in \mathbb{N}$. Then, we see that the family $\kappa'\coloneq\displaystyle\bigcup_{j=1}^{\infty}\tilde{\kappa}_{j}$ is pairwise disjoint. Moreover, take $J\in\kappa_{j}$ arbitrarily and we can get a $J'\in\tilde{\kappa}_{1}\cup\dotsb\cup\tilde{\kappa}_{j}$ with $J\cap J'\not=\emptyset$ by the maximality. Notice that $\mathrm{diam}_{d}(J)\le2\mathrm{diam}_{d}(J')$, and hence $J\subseteq \hat{J'}$ holds. Therefore, we have
\[E\subseteq\displaystyle\bigcup_{J\in\kappa}J\subseteq\displaystyle\bigcup_{J'\in\kappa'}\hat{J'}.\]

Next, assume moreover that any causal diamond in $X$ is closed and  $\kappa$ is a fine covering of $E$. Then, let $K\subseteq \kappa'$ be an arbitrarily taken finite subfamily. Take $x\in E\setminus \displaystyle\bigcup_{J\in K}J$. Since any causal diamond in $X$ is closed and $\kappa$ is a fine covering of $E$, we can take an open ball $B$ and $J'\in \kappa$ with
$x\in J' \subseteq B \subseteq E\setminus \displaystyle\bigcup_{J\in K}J$. Notice that we can get $J^{*}\in \kappa'$ such that $J'\cap J^{*}\not=\emptyset$ and $J'\subseteq\hat{J^{*}}$. It is obvious that such $J^{*}\in\kappa'$ is not an element of $K$. Therefore, we have $x\in \displaystyle\bigcup_{J'\in\kappa'\setminus K}\hat{J'}.$
\end{proof}

Next, with the preceding Lemma \ref{causal covering lemma}, we prove the following lemma.
\begin{Lemma}\label{volume upper bound of causal diamonds}
  Let $(X, d, \le, \ll, \tau)$ be a Lorentzian pre-length space satisfying the local causal enlargement property and $E\subseteq X$ with $\mathcal{V}^{s}_{d}(E)<\infty$. Assume that $\mathcal{V}^{s}_{d}$ is an outer regular measure, $\mathcal{V}^{s}_{d}(J(p,q))=0$ if $p$ and $q$ are null related, any causal diamond in $X$ is closed, and there is a compact set $A\subseteq X$ which satisfies $d(E, A^{c})>0$. Then for any $\epsilon>0$, we can get a $\mathcal{V}^{s}_{d}$-negligible set $N\subseteq E$ and then for any $x\in E\setminus N$ there exists $\delta_{x}>0$ such that
  \[\left\{ \begin{aligned}
& (1) \;J\in\mathcal{J} \\
& (2) \;x\in J\subseteq B_{d}(x, \delta_{x})\\
 \end{aligned} \right.
  \quad \Rightarrow
  \quad \mathcal{V}^{s}_{d}(E\cap J)\le (1+\epsilon)\rho_{s}(J).
\]

\end{Lemma}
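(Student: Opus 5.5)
We argue by contradiction, in the style of the density theorems for Hausdorff measures. Fix $\epsilon>0$ and let $N$ be the set of points $x\in E$ for which the conclusion fails. For such $x$ and every $\delta>0$ there is a $J\in\mathcal{J}$ with $x\in J\subseteq B_{d}(x,\delta)$ and $\mathcal{V}^{s}_{d}(E\cap J)>(1+\epsilon)\rho_{s}(J)$. We will show that $\mathcal{V}^{s}_{d}(N)=0$.

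First we discard null diamonds. If $J=J(p,q)$ with $p,q$ null related, then $\mathcal{V}^{s}_{d}(J)=0$ by hypothesis. Hence $\mathcal{V}^{s}_{d}(E\cap J)=0$, and the inequality holds trivially. So every bad diamond satisfies $p\ll q$. The same inequality also forces $\rho_{s}(J)<\infty$, because $\mathcal{V}^{s}_{d}(E)<\infty$.

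Let $\kappa$ be the family of all bad diamonds $J(p,q)$ that meet $N$ and have diameter smaller than the constant $C$ of Lemma \ref{causal covering lemma}, applied with $E$ replaced by $N$ (note $d(N,A^{c})\ge d(E,A^{c})>0$). Then $\kappa$ is a fine covering of $N$ satisfying (A).

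Fix $\eta>0$. By outer regularity choose an open set $O\supseteq N$ with $\mathcal{V}^{s}_{d}(O)\le\mathcal{V}^{s}_{d}(N)+\eta$. Here we should really use the outer measure of $N$, because $N$ need not be measurable a priori. We then shrink $\kappa$ to the diamonds contained in $O$, which is possible by fineness. Lemma \ref{causal covering lemma} gives a pairwise disjoint subfamily $\kappa'$ such that, for every finite $K\subseteq\kappa'$,
\[N\subseteq\bigcup_{J\in K}J\cup\bigcup_{J'\in\kappa'\setminus K}\hat{J'}.\]
Disjointness and the bad inequality give
\[(1+\epsilon)\sum_{J\in\kappa'}\rho_{s}(J)<\sum_{J\in\kappa'}\mathcal{V}^{s}_{d}(E\cap J)\le\mathcal{V}^{s}_{d}(E\cap O)<\infty .\]
In particular $\kappa'$ is countable, since only countably many disjoint members can have positive measure. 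Members of measure zero would contradict the strict inequality.

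Next we cover $N$ efficiently for $\mathcal{V}^{s}_{d,\delta'}$. Choose $K$ finite so that $\sum_{\kappa'\setminus K}\rho_{s}(J)$ is small. The enlargements satisfy $\rho_{s}(\hat{J'})\le C_{2}^{s}\rho_{s}(J')$ and $\mathrm{diam}\,\hat{J'}\le C_{1}\,\mathrm{diam}\,J'$, so the tail contributes a small amount to $\mathcal{V}^{s}_{d,\delta'}$. For the finite part $K$ we need $\mathcal{V}^{s}_{d}(N\cap J)$, not $\rho_{s}(J)$. Here we use the bad inequality in reverse. Roughly, $\mathcal{V}^{s}_{d}(N\cap\bigcup_{K}J)\le\mathcal{V}^{s}_{d}(O\cap\bigcup_{K}J)$, while the diamonds in $K$ carry $E$-mass at least $(1+\epsilon)\sum_{K}\rho_{s}(J)$.

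The cleanest route, and the main obstacle, is to compare $\mathcal{V}^{s}_{d}(N)$ with the sum of $\rho_{s}$ over a covering. Since $N\subseteq\bigcup_{K}J\cup\bigcup_{\kappa'\setminus K}\hat{J'}$, we use the fact that $\mathcal{V}^{s}_{d}$ of a diamond is at most $\rho_{s}$ of it, since the diamond covers itself. We also apply the same selection to $N$ alone rather than to $E$, so that the bad inequality reads $\mathcal{V}^{s}_{d}(E\cap J)$ with $E\supseteq N$. Putting these together should give
\[\mathcal{V}^{s}_{d}(N)\le\sum_{\kappa'}\rho_{s}(J)+C_{2}^{s}\cdot(\text{tail})\le\frac{1}{1+\epsilon}\,\mathcal{V}^{s}_{d}(O\cap E)+o(1)\le\frac{1}{1+\epsilon}\bigl(\mathcal{V}^{s}_{d}(N)+\eta\bigr)+o(1),\]
where the middle step requires $O$ to be chosen close to $E\cap N$ in measure. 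To make this rigorous we take $O$ approximating the measurable hull of $N$ inside $E$. Then $\mathcal{V}^{s}_{d}(E\cap O)\le\mathcal{V}^{s}_{d}(N)+\eta$, which uses outer regularity applied to a Borel hull of $N$ intersected with the measurable part of $E$.

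Letting $\eta\to0$ and the tail vanish gives $\mathcal{V}^{s}_{d}(N)\le(1+\epsilon)^{-1}\mathcal{V}^{s}_{d}(N)$. Since $\mathcal{V}^{s}_{d}(N)<\infty$, this forces $\mathcal{V}^{s}_{d}(N)=0$.

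The delicate points, which I expect to be the main obstacle, are three. First, handling measurability of $N$ and $E$ via hulls. Second, making sure the upper bound $\mathcal{V}^{s}_{d}(J)\le\rho_{s}(J)$ is available at the scale $\delta'$ for the finite family $K$, by shrinking diamonds or using that the $\delta$-covering by $J$ itself is admissible once $\mathrm{diam}\,J<\delta'$. Third, using closedness of diamonds in the finite-$K$ step of Lemma \ref{causal covering lemma}.
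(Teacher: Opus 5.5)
Your overall strategy is the paper's. The paper's $G$ is your $N$, and it uses the same ingredients: an open $O\supseteq N$ from outer regularity, and a fine cover of $N$ by bad diamonds inside $O$ (timelike, since null diamonds are $\mathcal{V}^{s}_{d}$-null). It also takes the disjoint subfamily of Lemma~\ref{causal covering lemma}, split into a finite part $K$ and an enlarged tail, and the bad inequality $\rho_{s}(J)<(1+\epsilon)^{-1}\mathcal{V}^{s}_{d}(E\cap J)$ summed over disjoint diamonds in $O$.

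\textbf{The first inequality of your final chain fails as written.} The ``fact'' $\mathcal{V}^{s}_{d}(J)\le\rho_{s}(J)$ is false in general; if it held, the lemma would be trivial with $N=\emptyset$. Only $\mathcal{V}^{s}_{d,\delta'}(J)\le\rho_{s}(J)$ with $\mathrm{diam}_{d}(J)<\delta'$ is available. Moreover, you restrict $\kappa$ only by $\mathrm{diam}_{d}<C$, so your covering controls just $\mathcal{V}^{s}_{d,C_{1}C}(N)$. That quantity tells you nothing about $\mathcal{V}^{s}_{d}(N)$.

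\textbf{The repair is what the paper does with $\Gamma_{\delta}$.} By fineness, keep only diamonds of diameter $<\delta$. You must select small bad diamonds, not shrink them, since shrinking would destroy the bad inequality. Then $K\cup\{\hat{J'}\,:\,J'\in\kappa'\setminus K\}$ is itself a causal $C_{1}\delta$-covering of $N$. Directly from the definition, $\mathcal{V}^{s}_{d,C_{1}\delta}(N)\le\sum_{\kappa'}\rho_{s}(J)+C_{2}^{s}\sum_{\kappa'\setminus K}\rho_{s}(J)$, with no appeal to $\mathcal{V}^{s}_{d}(J)$. The first sum is at most $(1+\epsilon)^{-1}\bigl(\mathcal{V}^{s}_{d}(N)+\eta\bigr)$. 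The tail can be made arbitrarily small because $\sum_{\kappa'}\rho_{s}(J)<\infty$. Both bounds are uniform in $\delta$, so you can let $\delta\to0$, then $\eta\to0$.

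\textbf{The other obstacles you list are not real.}
\begin{itemize}
\item $\mathcal{V}^{s}_{d}(E\cap O)\le\mathcal{V}^{s}_{d}(O)\le\mathcal{V}^{s}_{d}(N)+\eta$ is plain monotonicity. No hull of ``$E\cap N$'' (which is just $N$) is needed.
\item A Borel hull of $N$, as in Lemma~\ref{estimate sets by Borel sets}, suffices to invoke outer regularity.
\item Closedness of diamonds gives measurability, hence additivity of $\mathcal{V}^{s}_{d}(E\cap\cdot)$ over the disjoint $J$.
\item The covering by the finite part $K$ plus the enlarged tail is already provided by Lemma~\ref{causal covering lemma}.
\end{itemize}
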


\begin{proof}
  Let $\epsilon>0$ and $\delta>0$. Let $G$ be the set of points $x\in E$ such that for every $j\in \mathbb{N}$ we can find a causal diamond $J_{x}^{j}\in \mathcal{J}$ satisfying
  \[ \left\{ \begin{aligned}
& (1) \;x\in J_{x}^{j}\subseteq B_{d}(x, j^{-1})\\
& (2) \;\mathcal{V}^{s}_{d}(E\cap J_{x}^{j})>(1+\epsilon)\rho_{s}(J_{x}^{j}).\\
  \end{aligned} \right.\tag{B}\\\]
Assume that $\mathcal{V}^{s}_{d}(G)>0$. Since $\mathcal{V}^{s}_{d}$ is an outer regular measure and $\mathcal{V}^{s}_{d}(G)<\infty$, we can find an open set $O\subseteq X$ with $\mathcal{V}^{s}_{d}(O)<(1+\frac{\epsilon}{4})\mathcal{V}^{s}_{d}(G)$. We can assign $J_{x}^{j}$ which enjoys (B) for each $x\in G$ and $j\in\mathbb{N}$, and moreover since we assume $\mathcal{V}^{s}_{d}(J(p,q))=0$ if $p$ and $q$ are null related, we can let $J_{x}^{j}$ have timelike related vertices. Now notice that since $G\subseteq E \subseteq A$ and $d(G, A^{c})>0$, we can apply Lemma \ref{causal covering lemma}. Then, take constants $C>0$, $C_{1}\ge1$, and $C_{2}\ge2$ as in Lemma \ref{causal covering lemma}. Then, we set a family of causal diamonds $\Gamma_{\delta}$ as 
\[\Gamma_{\delta}\coloneq\{J_{x}^{j}\;|\;j\in\mathbb{N}, x\in G, \mathrm{diam}_{d}(J_{x}^{j})<\mathrm{min}\{C, \delta\}, J_{x}^{j}\subseteq O\}.\]
For any $x\in G$ with $j_{x}\in\mathbb{N}$ such that $B(x, j_{x}^{-1})\subseteq O$ and $j_{x}^{-1}<\frac{1}{2}\mathrm{min}\{C, \delta\}$, we see that for any $\lambda>0$ letting $j\in\mathbb{N}$ be $j^{-1}\le\min{(j_{x}^{-1}, \lambda)}$, $J_{x}^{j}\in\Gamma_{\delta}$ and $x\in J_{x}^{j}\subseteq B(x, j^{-1})\subseteq B(x, \lambda)$ follow. Thus, we have $\Gamma_{\delta}$ forms a fine cover of $G$ for any $\delta>0$. Then, applying Lemma $\ref{causal covering lemma}$ again, we can get a pairwise disjoint subfamily $\Gamma_{\delta,1}\subseteq\Gamma_{\delta}$ and finite subfamily $\Gamma_{\delta,2}\subseteq\Gamma_{\delta,1}$ such that 
\[\left\{ \begin{aligned}
& (1)\;G\subseteq \displaystyle\bigcup_{J\in\Gamma_{\delta,1}}\hat{J}\\
& (2)\displaystyle\sum_{J'\in\Gamma_{\delta,1}\setminus\Gamma_{\delta,2}}\mathcal{V}^{s}_{d}(J')<\frac{\epsilon}{4C_{2}^{s}}\mathcal{V}^{s}_{d}(G).\\
 \end{aligned} \right.\tag{C}\]
 Indeed, since we have 
\[\displaystyle\bigcup_{J\in\Gamma_{\delta,1}}J\subseteq O,\]
$\mathcal{V}^{s}_{d}(O)<\infty$, and $\Gamma_{\delta,1}$ is pairwise disjoint, at most countable elements of $\Gamma_{\delta,1}$ have positive volume, and thus we have
\[\mathcal{V}_{d}^{s}(\displaystyle\bigcup_{J\in\Gamma_{\delta,1}}J)=\displaystyle\sum_{J\in\Gamma_{\delta,1}}\mathcal{V}_{d}^{s}(J)<\infty.\]
Therefore, taking a sufficiently large finite subfamily, we can get $\Gamma_{\delta, 2}\subseteq\Gamma_{\delta, 1}$ satisfying (2) in (C). Moreover, since any causal diamond in $X$ is closed, we have
\[G\subseteq \displaystyle\bigcup_{J\in\Gamma_{\delta,2}}J\;\cup\displaystyle\bigcup_{J'\in\Gamma_{\delta,1}\setminus\Gamma_{\delta,2}}\hat{J'},\]
and thus the following holds:
\[\begin{aligned}
  \mathcal{V}^{s}_{d,C_{1}\cdot\delta}(G) 
  &\le \displaystyle\sum_{J\in\Gamma_{\delta,2}}\rho_{s}(J)+\displaystyle\sum_{J'\in\Gamma_{\delta,1}\setminus\Gamma_{\delta,2}}\rho_{s}(\hat{J'})\\
  &<\displaystyle\sum_{J\in\Gamma_{\delta,2}}\frac{1}{1+\epsilon}\mathcal{V}^{s}_{d}(J)+\displaystyle\sum_{J'\in\Gamma_{\delta,1}\setminus\Gamma_{\delta,2}}\frac{C_{2}^{s}}{1+\epsilon}\mathcal{V}^{s}_{d}(J')\\
  &<\frac{1}{1+\epsilon}\mathcal{V}^{s}_{d}(O)+\frac{C_{2}^{s}}{1+\epsilon}\cdot\frac{\epsilon}{4C_{2}^{s}}\mathcal{V}^{s}_{d}(G)\\
  &<\frac{1}{1+\epsilon}\cdot(1+\frac{\epsilon}{4})\mathcal{V}^{s}_{d}(G)+\frac{\epsilon}{4(1+\epsilon)}\mathcal{V}^{s}_{d}(G)\\
  &=\frac{1+\frac{\epsilon}{2}}{1+\epsilon}\mathcal{V}^{s}_{d}(G).
\end{aligned}\]
Then, letting $\delta\to 0$, we get $0<\mathcal{V}^{s}_{d}(G)<\mathcal{V}^{s}_{d}(G)<\infty$. This is a contradiction, and thus it follows that $G$ is $\mathcal{V}^{s}_{d}$-negligible. The claim holds.
\end{proof}

Before the proof of Theorem \ref{equality of integration}, we also show the following lemma.

\begin{Lemma}\label{estimate sets by Borel sets}
  Let $X$ be a Lorentzian pre-length space and $s\ge0$. Assume that any causal diamond in $X$ is Borel. Then, for any $A\subseteq X$, we find a Borel set $\tilde{A}$ such that $A \subseteq \tilde{A}$ and $\mathcal{V}^{s}(A)=\mathcal{V}^{s}(\tilde{A})$.
\end{Lemma}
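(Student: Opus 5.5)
The plan is to mimic the standard proof that Hausdorff outer measures are Borel regular, with causal diamonds playing the role of the covering sets. If $\mathcal{V}^{s}(A)=\infty$, I simply take $\tilde{A}\coloneq X$. This is Borel and contains $A$. Monotonicity of the outer measure then gives $\mathcal{V}^{s}(X)=\infty=\mathcal{V}^{s}(A)$.

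So assume $\mathcal{V}^{s}(A)<\infty$. For each $k\in\mathbb{N}$, the quantity $\mathcal{V}^{s}_{d,1/k}(A)\le\mathcal{V}^{s}(A)$ is finite. By Definition \ref{Def V^n} I can therefore choose a causal $\frac{1}{k}$-covering $\{A^{k}_{i}\}_{i=1}^{\infty}\subseteq\mathcal{J}$ of $A$ with
\[\sum_{i=1}^{\infty}\rho_{s}(A^{k}_{i})<\mathcal{V}^{s}_{d,1/k}(A)+\frac{1}{k}.\]
Set $B_{k}\coloneq\bigcup_{i}A^{k}_{i}$ and $\tilde{A}\coloneq\bigcap_{k}B_{k}$. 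By the hypothesis that every causal diamond is Borel (and $\emptyset$ is trivially Borel), each $B_{k}$ is a countable union of Borel sets. Hence $\tilde{A}$ is Borel, and clearly $A\subseteq\tilde{A}$.

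For the reverse inequality, fix $k$. Since $\tilde{A}\subseteq B_{k}$, the same family $\{A^{k}_{i}\}_{i}$ is also a causal $\frac1k$-covering of $\tilde{A}$. This gives
\[\mathcal{V}^{s}_{d,1/k}(\tilde{A})\le\sum_{i}\rho_{s}(A^{k}_{i})<\mathcal{V}^{s}_{d,1/k}(A)+\frac1k.\]
For $m\ge k$, monotonicity in $\delta$ yields $\mathcal{V}^{s}_{d,1/k}(\tilde{A})\le\mathcal{V}^{s}_{d,1/m}(\tilde{A})<\mathcal{V}^{s}_{d,1/m}(A)+\frac1m$. Letting $m\to\infty$ and then $k\to\infty$ gives $\mathcal{V}^{s}(\tilde{A})\le\mathcal{V}^{s}(A)$. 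Monotonicity gives the opposite inequality, so equality holds.

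I do not expect a real obstacle here. The one point to watch is the strict inequality $\mathrm{diam}<\delta$ in the definition of causal $\delta$-coverings: the coverings for $\tilde{A}$ must be exactly the chosen ones, with no enlargement. The argument above respects this, since it reuses the chosen families verbatim. The Borel hypothesis on causal diamonds is used only to make each $B_{k}$, and hence $\tilde{A}$, Borel.
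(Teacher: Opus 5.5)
Your proof is correct and is essentially the paper's argument: choose near-optimal causal $\frac1k$-coverings of $A$, take $\tilde A$ to be the intersection over $k$ of their unions, and reuse the same coverings to bound $\mathcal{V}^{s}_{d,1/k}(\tilde A)$. The only difference is cosmetic. The paper bounds each sum by $\mathcal{V}^{s}(A)+\frac1k$ directly. Your double limit in $m$ and $k$ is also unnecessary, since letting $k\to\infty$ in $\mathcal{V}^{s}_{d,1/k}(\tilde A)<\mathcal{V}^{s}_{d,1/k}(A)+\frac1k$ already suffices.
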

\begin{proof}
  Let $A\subseteq X$. Assume $M\coloneq\mathcal{V}^{s}(A)<\infty$. If not $\tilde{A}=X$ is the desired Borel set. Since $M<\infty$, for each $j\in\mathbb{N}$, $\mathcal{V}^{s}_{j^{-1}}(A)<\infty$ holds, and thus we can find a causal $j^{-1}$-covering, $L_{j}$ of $A$ such that 
  \[\displaystyle\sum_{J\in L_{j}}\rho_{s}(J)< M+j^{-1}.\]
  Let $L\coloneq\displaystyle\bigcap_{j=1}^{\infty}\displaystyle\bigcup_{J\in L_{j}}J$, and notice that since any causal diamond in $X$ is Borel, $L$ is Borel. We can obviously see $A\subseteq L$ and that for any $j\in\mathbb{N}$, $\mathcal{V}^{s}_{j^{-1}}(L)<M+j^{-1}$ holds. Thus, by $j\to\infty$, $\mathcal{V}^{s}(L)\le M$ follows, and it implies $\mathcal{V}^{s}(L)=\mathcal{V}^{s}(A)$.
\end{proof}

With Lemma \ref{causal covering lemma}, Lemma \ref{volume upper bound of causal diamonds}, and Lemma \ref{estimate sets by Borel sets}, we show Theorem \ref{equality of integration}.

\begin{proof}[Proof of Theorem \ref{equality of integration}]
  First, we prove the inequality
  \[\int_{X}^{\bullet}fd\mathcal{V}^{s}_{d}\le\int_{X}^{\ast}fd\mathcal{V}^{s}_{d}.\]
We assume that the right hand side of the above inequality is finite. Let $\epsilon>0$. Then, by Lemma \ref{estimation by step functions} we can take a step function of $\{(a_{i}, A_{i})\}_{i\in\mathbb{N}}$ such that
\[\left\{\begin{aligned}
  &(1) \;\textrm{$a_{i}>0$ and $A_{i}$ is $\mathcal{V}^{s}_{d}$-measurable},\\
  &(2) \;f\le\displaystyle\sum_{i=1}^{\infty}a_{i}\chi_{A_{i}},\\
  &(3) \;\displaystyle\sum_{i=1}^{\infty}a_{i}\mathcal{V}^{s}_{d,\delta}(A_{i})\le\displaystyle\sum_{i=1}^{\infty}a_{i}\mathcal{V}^{s}_{d}(A_{i})\le\int_{X}^{\ast}fd\mathcal{V}^{s}_{d}+\frac{\epsilon}{2}.
\end{aligned}\right.  
\]
Since $\mathcal{V}^{s}_{d, \delta}(A_{i})<\infty$, for each $i\in \mathbb{N}$ we can take a causal $\delta$-covering $\{A^{(i)}_{j}\}_{j\in\mathbb{N}}$ of $A_{i}$ such that
\[\displaystyle\sum_{j=1}^{\infty}\rho_{s}(A^{(i)}_{j})<\mathcal{V}^{s}_{d,\delta}(A_{i})+\frac{\epsilon}{2^{i+1}a_{i}}.\]
Notice that we have
\[f\le\displaystyle\sum_{i,j=1}^{\infty}a_{i}\chi_{A^{(i)}_{j}},\]
and thus $\{(a_{i}, A^{(i)}_{j})\}_{i,j\in\mathbb{N}}$ is a weighted causal $\delta$-covering of $f$. Then, from the definition of the weighted $\delta$-integral of $f$, it follows that 
\[\begin{aligned}
\int_{X}^{\bullet}fd\mathcal{V}^{s}_{d,\delta}&\le\displaystyle\sum_{i,j=1}^{\infty}a_{i}\rho_{s}(A^{(i)}_{j})\\
                                            &=\displaystyle\sum_{i=1}^{\infty}a_{i}\big{(}\displaystyle\sum_{j=1}^{\infty}\rho_{s}(A^{(i)}_{j})\big{)}\\
                                            &<\displaystyle\sum_{i=1}^{\infty}a_{i}\mathcal{V}^{s}_{d,\delta}(A_{i})+\frac{\epsilon}{2}\\
                                            &<\int_{X}^{\ast}fd\mathcal{V}^{s}_{d}+\epsilon.
\end{aligned}\]
Then, letting $\epsilon\to0$ first and $\delta\to0$ next, we get the desired inequality. 

Next, we prove the reverse inequality, 
  \[\int_{X}^{\ast}fd\mathcal{V}^{s}_{d}\le\int_{X}^{\bullet}fd\mathcal{V}^{s}_{d}.\]
 We assume that the right hand side is finite. Let $A\coloneq\{x\in X\;|\;f(x)>0\}$. Notice that by Lemma \ref{estimate sets by Borel sets} we can assume $A$ is a Borel set. As the first step, we assume that $A$ satisfies the conditions mentioned in Lemma $\ref{volume upper bound of causal diamonds}$, namely
 \[\left\{\begin{aligned}
  &(1) \;\mathcal{V}^{s}_{d}(A)<\infty, \\
  &(2) \;\textrm{There exists a compact set $B\subseteq X$ satisfying $d(A, B^{c})>0$}. \\
\end{aligned}\right.\]
Let $\epsilon>0$. For $j\in\mathbb{N}$, let $W_{j}$ be the set of $x\in A$ which satisfies the property;
\[\left\{ \begin{aligned}
&J\in\mathcal{J} \\
&x\in J\subseteq B_{d}(x, j^{-1})\\
 \end{aligned} \right.
  \quad \Rightarrow
  \quad \mathcal{V}^{s}_{d}(A\cap J)\le (1+\epsilon)\rho_{s}(J).
\]
Then, since we can apply Lemma $\ref{volume upper bound of causal diamonds}$ for $A$, we have
\[A=N\cup\displaystyle\bigcup_{j\in\mathbb{N}}W_{j},\]
where $N\subseteq A$ is a $\mathcal{V}^{s}_{d}$-negligible set. 
On the other hand, since the causal weighted integration of $f$ regarding to $\mathcal{V}_{d}^{s}$ is finite, for each $j\in\mathbb{N}$ we can take a weighted causal $j^{-1}$-covering $\{(a_{i}, A^{(j)}_{i})\}_{i\in\mathbb{N}}$ of $f$ such that 
\[\displaystyle\sum_{i\in\mathbb{N}}a_{i}\rho_{s}(A^{(j)}_{i})\le\int_{X}^{\bullet}fd\mathcal{V}^{s}_{d,j^{-1}}+\epsilon.\]
Let $I^{j}\coloneq\{i\in \mathbb{N}\;|\;W_{j}\cap A^{(j)}_{i}\not=\emptyset\}$ for each $j\in\mathbb{N}$, and then we can see that
\[f\cdot\chi_{W_{j}}\le\displaystyle\sum_{i\in I^{j}}a_{i}\chi_{A\cap A^{(j)}_{i}}.\]
Then, by integrating both sides, we have
\[\begin{aligned}
\int_{X}^{\ast}f\cdot\chi_{W_{j}}d\mathcal{V}^{s}_{d}&\le\int_{X}^{\ast}\displaystyle\sum_{i\in I^{j}}a_{i}\chi_{A\cap A^{(i)}_{j}}d\mathcal{V}^{s}_{d}\\
                                            &=\displaystyle\sum_{i\in I^{j}}a_{i}\mathcal{V}^{s}_{d}(A\cap A^{(j)}_{i})\\
                                            &<\displaystyle\sum_{i\in I^{j}}a_{i}(1+\epsilon)\rho_{s}(A^{(j)}_{i})\\
                                            &\le(1+\epsilon)\big{(}\int_{X}^{\bullet}fd\mathcal{V}^{s}_{d,j^{-1}}+\epsilon\big{)}.
\end{aligned}\]
The equality in the second line holds since $A$ is Borel. Then, letting $j\to\infty$ first and $\epsilon\to 0$ next, we have the desired inequality thanks to Lemma \ref{flipping limit}.

For general $f$, fix a point $x\in X$ and a sequence $\{X_{i}\}_{i=1}^{\infty}$ of subsets of $X$ satisfying $X_{i}\uparrow X$ and $\mathcal{V}^{s}_{d}(X_{i})<\infty$. The existence of such sequence is guaranteed since $\mathcal{V}_{d}^{s}$ is $\sigma$-finite. Then, notice $B_{d}(x, i)$ is compact for any $i\in\mathbb{N}$ since $d$ is proper, and we see $g_{i}\coloneq f\cdot \chi_{X_{i}\cap B_{d}(x, i)}$ satisfies the above assumption. Therefore, we have
\[\int_{X}^{\ast}g_{i}d\mathcal{V}^{s}_{d}\le\int_{X}^{\bullet}g_{i}d\mathcal{V}^{s}_{d}\le\int_{X}^{\bullet}fd\mathcal{V}^{s}_{d},\]
and since $g_{i}\to f$ as $i\to \infty$,
\[\int_{X}^{\ast}fd\mathcal{V}^{s}_{d}\le\int_{X}^{\bullet}fd\mathcal{V}^{s}_{d}\]
follows again by Lemma \ref{flipping limit}.
\end{proof}

\begin{Remark}
  One may think that the causal weighted integration of a function can be defined not necessary with step functions of causal diamonds. Indeed, if we consider a family $\mathcal{C}$ of sets in a Lorentzian pre-length space $X$ defined as
  \[\mathcal{C}\coloneq\left\{j\subseteq X\;|\textrm{ $j$ is included in a causal diamond $J(p,q)$ and $p,q\in j$}\;\right\}\]
and define $\rho_{s}(j)$ as $\rho_{s}(J)$ with any causal diamond $J$ including $j$, $\rho_{s}(j)$ is well-defined. If we define the causal weighted integration by step functions of sets in $\mathcal{C}$ rather than causal diamonds, we can replace the assumption that the map $U$ is uniformly $d$-controlling in Theorem \ref{Lcoarea inequality for causal weighted integration} with that $U$ is causality preserving and uniformly continuous. Nevertheless, we have to define the causal weighted integral with step functions of causal diamonds since Lemma \ref{volume upper bound of causal diamonds} is valid only for causal diamonds.
\end{Remark}

Theorem \ref{LEilenberg's inequality} is a direct implication of Theorem \ref{Lcoarea inequality for causal weighted integration} and Theorem \ref{equality of integration}.

\begin{Remark}
  In \cite[Proposition 8.1.12]{DoC}, we see that any locally finite Borel measure on a Polish space is regular i.e. it is outer and inner regular.
\end{Remark}

\begin{Remark}
  In $\cite{Mc;Sa}$, McCann and S\"{a}mann discussed the enlargement of causal diamonds through the notion of cylindrical neighborhoods, and we can see that any strongly causal and causally plain $C^{0}$-spacetime admits the local causal enlargement property. Moreover, they consider Borel measure with doubling properties and a covering lemma which our covering lemma is similar to.
\end{Remark}

\section{Measurability of the integrand}
In this section, we prove the measurability of the integrand in Theorem \ref{LEilenberg's inequality} and conclude with Theorem \ref{LORENTZIAN COAREA INEQUALITY}.

\begin{Theorem}[Measurability of the integrand]\label{measurability of the integrand}
  Let $(X, d_{X}, \le_{X}, \ll_{X}, \tau_{X})$ and $(Y, d_{Y}, \le_{Y}, \ll_{Y}, \tau_{Y})$ be Lorentzian pre-length spaces, and $F:X\to Y$ be a continuous map. Assume all the assumptions in Theorem \ref{LEilenberg's inequality} and in addition that $\mathcal{V}^{s}_{d_{X}}$ is an inner regular measure on $X$, $d_{X}$ is proper, and $X$ satisfies the causal estimation property. Then, for any $E\subseteq X$ such that is $\mathcal{V}^{s}_{d_{X}}$-measurable and satisfies $\mathcal{V}^{s}_{d_{X}}(E)<\infty$, the map $y\mapsto\mathcal{V}^{s-t}_{d_{X}}(F^{-1}(y)\cap E)$ is a $\mathcal{V}^{t}_{d_{Y}}$-measurable function.
\end{Theorem}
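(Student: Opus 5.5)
We follow the strategy Esmayli and Haj\l asz use in the metric case: first treat compact $E$ by an upper semicontinuity argument, then pass to general measurable $E$ by inner regularity and monotone approximation.

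\textbf{Step 1 (compact $E=K$).} Fix $\delta>0$. We claim that $y\mapsto \mathcal{V}^{s-t}_{d_X,\delta}(F^{-1}(y)\cap K)$ is upper semicontinuous on $Y$, hence Borel. Fix $y_0$ and $\epsilon>0$, and take a causal $\delta$-covering $\{J(p_i,q_i)\}$ of $F^{-1}(y_0)\cap K$ that is nearly optimal.
\begin{itemize}
\item We may assume each diamond lies in a causal estimation neighborhood and has $\mathrm{diam}$ strictly less than $\delta$ (with a small margin). By condition (2) of Definition \ref{Causal estimation property}, with $\epsilon_1$ close to $1$ and $\epsilon_{2,i}$ small and summable, each $J(p_i,q_i)$ is contained in a chronological diamond $I(\tilde p_i,\tilde q_i)$. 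These satisfy $\mathrm{diam}<\delta$ and $\sum\rho_{s-t}(J(\tilde p_i,\tilde q_i))\le\sum\rho_{s-t}(J(p_i,q_i))+\epsilon$.
\item By condition (1), the closed diamond $J(\tilde p_i,\tilde q_i)$ has the same diameter as $I(\tilde p_i,\tilde q_i)$, so it is still admissible in a causal $\delta$-covering.
\item The chronological diamonds are open, since $\tau$ is lower semicontinuous. By compactness of $F^{-1}(y_0)\cap K$, finitely many of them cover it; call their union $O$.
\item The set $K\setminus O$ is compact and $F$ is continuous, so $F(K\setminus O)$ is compact and does not contain $y_0$. Hence there is $r>0$ with $F^{-1}(y)\cap K\subseteq O$ for all $y\in B(y_0,r)$.
\end{itemize}
For such $y$ this gives $\mathcal{V}^{s-t}_{d_X,\delta}(F^{-1}(y)\cap K)\le \mathcal{V}^{s-t}_{d_X,\delta}(F^{-1}(y_0)\cap K)+2\epsilon$, which is the claimed upper semicontinuity. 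Letting $\delta\downarrow 0$ along a sequence, $\mathcal{V}^{s-t}_{d_X}(F^{-1}(y)\cap K)$ is a monotone limit of Borel functions, hence Borel.

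\textbf{Step 2 (measurable $E$ with $\mathcal{V}^s_{d_X}(E)<\infty$).} By inner regularity, and using that $d_X$ is proper so closed bounded sets are compact, choose compact sets $K_1\subseteq K_2\subseteq\cdots\subseteq E$ with $\mathcal{V}^s_{d_X}(E\setminus\bigcup_j K_j)=0$. Put $Z=E\setminus\bigcup_j K_j$.
\begin{itemize}
\item For every $y$, the sets $F^{-1}(y)\cap K_j$ increase to $F^{-1}(y)\cap\bigcup_j K_j$. Since $\mathcal{V}^{s-t}_{d_X}$ is a Borel (outer) measure and the $K_j$ are closed, continuity from below gives $\mathcal{V}^{s-t}_{d_X}(F^{-1}(y)\cap K_j)\uparrow\mathcal{V}^{s-t}_{d_X}(F^{-1}(y)\cap\bigcup_j K_j)$. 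The limit is therefore Borel by Step 1.
\item For the null set $Z$, apply Theorem \ref{LEilenberg's inequality}, whose hypotheses are assumed. It gives $\int^\ast_Y\mathcal{V}^{s-t}_{d_X}(F^{-1}(y)\cap Z)\,d\mathcal{V}^t_{d_Y}=0$, so by Lemma \ref{zero integration imply zero a.e.} the function $y\mapsto\mathcal{V}^{s-t}_{d_X}(F^{-1}(y)\cap Z)$ vanishes for $\mathcal{V}^t_{d_Y}$-a.e. $y$.
\end{itemize}
By subadditivity, $\mathcal{V}^{s-t}_{d_X}(F^{-1}(y)\cap E)$ agrees a.e. with the Borel limit from the first item. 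Since $\mathcal{V}^t_{d_Y}$ is complete on its measurable sets, the integrand is $\mathcal{V}^t_{d_Y}$-measurable.

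\textbf{Main obstacle.} The delicate step is Step 1. We must replace a near-optimal causal covering of the fiber by open chronological diamonds without increasing diameters beyond $\delta$ or the $\rho_{s-t}$-sum by more than $\epsilon$. This is exactly what the causal estimation property is designed to provide.
\begin{itemize}
\item One care point is that $\mathrm{diam}<\delta$ must hold strictly. We handle this by first working with coverings of diameter $<\delta'<\delta$ and choosing $\epsilon_1<\delta/\delta'$.
\item The other is that the covering diamonds may not lie inside a single estimation neighborhood. We handle this with a Lebesgue number argument on a compact neighborhood of $K$, as in the proof of Lemma \ref{causal covering lemma}, after shrinking $\delta$.
\end{itemize}
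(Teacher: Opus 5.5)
Your proposal is correct and is essentially the paper's proof. It uses the same inner-regular decomposition, with the null part handled by Theorem \ref{LEilenberg's inequality} and Lemma \ref{zero integration imply zero a.e.}, and the same compactness/continuity argument showing that $y\mapsto\mathcal{V}^{s-t}_{d_X,\delta}(F^{-1}(y)\cap K)$ has open strict sublevel sets (your upper semicontinuity) once a near-optimal covering of the fiber is replaced by open chronological diamonds. The only difference is that you do this replacement inline rather than through the identity $\mathcal{M}^{s-t}_{\delta}=\mathcal{V}^{s-t}_{\delta}$ of Lemma \ref{Estimation of causal diamond by chronological dismonds}. For the strict bound $\mathrm{diam}<\delta$, a uniform margin $\delta'$ does not work, since a countable near-optimal covering need not have diameters bounded away from $\delta$; choose $\epsilon_1\in(1,\delta/\mathrm{diam}_{d_X}(J(p_i,q_i)))$ separately for each diamond instead.
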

$\linebreak$To this end we introduce a variant of the Lorentzian Hausdorff measure.

\begin{Definition}[Strong Lorentzian Hausdorff measure]
  Let $(X, d, \le, \ll, \tau)$ be a Lorentzian pre-length space. Let $E\subseteq X$ and $\delta>0$. We call a subfamily $\{I_{i}\}_{i=1}^{\infty}$ of $\mathcal{I}\coloneq\{I(p,q)\;|\;p\ll q\}\cup\{\emptyset\}$ a \textit{chronological $\delta$-covering} of $E$ if it holds
  \[\left\{ \begin{aligned}
& \mathrm{diam}(I_{i})\le\delta,\\
& E\subseteq\displaystyle\bigcup_{i=1}^{\infty}I_{i}.\\
 \end{aligned} \right.\]
Then, for $s\ge0$ we define
  \[\mathcal{M}^{s}_{\delta}(E)\coloneq\inf \left\{\displaystyle\sum_{i=1}^{\infty} \rho_{s} (I_{i})\;\Bigg{|}\;\{I_{i}\}_{i=1}^{\infty} \textrm{is a chronological $\delta$-covering of $E$}\right\},\]
  and
  \[\mathcal{M}^{s}(E)\coloneq\displaystyle\lim_{\delta\to0}\mathcal{M}^{s}_{\delta}(E),\]
where for $p\ll q$ we set $\rho_{s}(I(p,q))=\omega_{s}\tau(p,q)^{s}$, and we let $\rho_{s}(\emptyset)=0$ and $\rho_{s}(I(p,q))=\infty$ for $I(p,q)\in \mathcal{I}$ with $\tau(p,q)=\infty$. 
\end{Definition}

Through the causal estimation property, we have the coincidence of Lorentzian and strong Lorentzian Hausdorff measures over any subset which is included in a compact set.
 
\begin{Lemma}\label{Estimation of causal diamond by chronological dismonds}
Let $(X, d, \le, \ll, \tau)$ be a Lorentzian pre-length space, $s\ge0$, and $E\subseteq X$. Assume that $X$ satisfies the causal estimation property and there exists a compact set $A\subseteq X$ such that $E\subseteq A$ and $d(E, A^{c})>0$. Then, we can find a constant $C>0$ such that $\mathcal{M}^{s}_{\delta}(G)=\mathcal{V}^{s}_{\delta}(G)$ for any $\delta<C$ and $G\subseteq E$. 
\end{Lemma}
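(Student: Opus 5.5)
The plan is to prove the two inequalities $\mathcal{V}^{s}_{\delta}(G)\le\mathcal{M}^{s}_{\delta}(G)$ and $\mathcal{M}^{s}_{\delta}(G)\le\mathcal{V}^{s}_{\delta}(G)$ separately. In each direction I would convert an almost optimal covering of one kind into a covering of the other kind with almost the same $\rho_{s}$-sum.

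\emph{Choice of $C$.} I would copy the first paragraph of the proof of Lemma \ref{causal covering lemma}, with causal estimation neighborhoods in place of causal enlargement neighborhoods.
\begin{itemize}
\item Assign a causal estimation neighborhood $U_{x}$ to each $x\in A$.
\item By compactness of $A$, extract a finite subcover $\{U_{x_{k}}\}_{k=1}^{M}$ and let $C'>0$ be a Lebesgue number for it.
\item Put $C\coloneq\min\{C',\tfrac12 d(E,A^{c})\}$.
\end{itemize}
Then any set $S$ with $S\cap G\neq\emptyset$ for some $G\subseteq E$ and $\mathrm{diam}_{d}(S)<C$ lies in $A$, hence in some $U_{x_{k}}$. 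In both directions, covering elements that do not meet $G$ can simply be discarded, since $\rho_{s}\ge0$. So we may assume every covering element meets $G$ and sits inside a single causal estimation neighborhood.

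\emph{Step 1: $\mathcal{M}^{s}_{\delta}(G)\le\mathcal{V}^{s}_{\delta}(G)$.}
\begin{itemize}
\item Assume the right side is finite, let $\epsilon>0$, and take a causal $\delta$-covering $\{J(p_{i},q_{i})\}$ of $G$ with $\sum_{i}\rho_{s}(J(p_{i},q_{i}))<\mathcal{V}^{s}_{\delta}(G)+\epsilon$.
\item Each $J(p_{i},q_{i})$ may have timelike or merely null related vertices. Either way it lies in some $U_{x_{k}}$, so condition (2) of Definition \ref{Causal estimation property} applies.
\item Because $\mathrm{diam}_{d}(J(p_{i},q_{i}))<\delta$ strictly, we can pick $\epsilon_{1}^{(i)}>1$ with $\epsilon_{1}^{(i)}\mathrm{diam}_{d}(J(p_{i},q_{i}))\le\delta$.
\item If $\tau(p_{i},q_{i})<\infty$, pick $\epsilon_{2}^{(i)}>0$ with $\omega_{s}\big((\tau(p_{i},q_{i})+\epsilon_{2}^{(i)})^{s}-\tau(p_{i},q_{i})^{s}\big)<\epsilon 2^{-i}$; this uses only continuity of $r\mapsto r^{s}$. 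If $\tau(p_{i},q_{i})=\infty$, the sum is infinite and there is nothing to prove.
\item The resulting chronological estimations $I(\tilde p_{i},\tilde q_{i})$ have $\tilde p_{i}\ll\tilde q_{i}$ (they are nonempty since they contain $J(p_{i},q_{i})\ni p_{i}$). They form a chronological $\delta$-covering of $G$ with $\sum_{i}\rho_{s}(I(\tilde p_{i},\tilde q_{i}))<\mathcal{V}^{s}_{\delta}(G)+2\epsilon$.
\end{itemize}

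\emph{Step 2: $\mathcal{V}^{s}_{\delta}(G)\le\mathcal{M}^{s}_{\delta}(G)$.}
\begin{itemize}
\item Take a chronological $\delta$-covering $\{I(p_{i},q_{i})\}$ of $G$, with $p_{i}\ll q_{i}$, that is almost optimal.
\item Since $\delta<C$, each $I(p_{i},q_{i})$ meeting $G$ lies in some $U_{x_{k}}$. Condition (1) of Definition \ref{Causal estimation property} gives $\mathrm{diam}_{d}(J(p_{i},q_{i}))=\mathrm{diam}_{d}(I(p_{i},q_{i}))$.
\item Also $J(p_{i},q_{i})\supseteq I(p_{i},q_{i})$, $p_{i}<q_{i}$, and $\rho_{s}(J(p_{i},q_{i}))=\rho_{s}(I(p_{i},q_{i}))$ by definition.
\item So the $J(p_{i},q_{i})$ cover $G$ with the same $\rho_{s}$-sum, which gives the inequality.
\end{itemize}

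\emph{Main obstacle.} The delicate point is the mismatch of the diameter constraints. Chronological coverings allow $\mathrm{diam}\le\delta$, while causal $\delta$-coverings require $\mathrm{diam}<\delta$. In Step 2 a chronological diamond of diameter exactly $\delta$ produces a causal diamond of diameter exactly $\delta$, which is not admissible for $\mathcal{V}^{s}_{\delta}$. I would first try to avoid this by reading the inequality up to the harmless convention on strictness, i.e.\ proving $\mathcal{V}^{s}_{\delta}\le\mathcal{M}^{s}_{\delta}\le\mathcal{V}^{s}_{\delta'}$ for all $\delta<\delta'<C$. Both quantities are monotone in $\delta$, so this already suffices for every later use, where only $\delta\to0$ matters. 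If the exact equality at each $\delta$ is really needed, I would try to replace each boundary-diameter $I(p_{i},q_{i})$ by a slightly smaller chronological diamond. However, the causal estimation property only enlarges diamonds, so this might not be available in general.
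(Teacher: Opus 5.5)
\textbf{Verdict.} Your two-step scheme is the paper's: a Lebesgue number for finitely many causal estimation neighborhoods, then converting near-optimal coverings in each direction. The obstacle you flag in Step~2 is a genuine gap for the literal statement, but the paper's proof has the same gap. Your written fallback chain also has the wrong inequality order.

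\textbf{Step~1 improves on the paper.} The paper fixes one $\epsilon_1=\epsilon$, obtains $\mathcal{M}^{s}_{\epsilon\delta}(G)\le(\epsilon-1)+\mathcal{V}^{s}_{\delta}(G)$, and lets $\epsilon\to1$. Since $\mathcal{M}^{s}_{\epsilon\delta}\le\mathcal{M}^{s}_{\delta}$, this only controls $\lim_{\delta'\downarrow\delta}\mathcal{M}^{s}_{\delta'}(G)$, not $\mathcal{M}^{s}_{\delta}(G)$. Its estimate $\rho_s(I(\tilde p_i,\tilde q_i))\le\rho_s(J(p_i,q_i))+2^{-i}(\epsilon-1)$ is also not justified for general $s$ (it is for $s=1$). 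Your per-diamond choices fix both issues: $\epsilon_1^{(i)}$ uses the strict bound $\mathrm{diam}_d(J(p_i,q_i))<\delta$, and $\epsilon_2^{(i)}$ uses continuity of $r\mapsto r^s$. Together they give $\mathcal{M}^{s}_{\delta}(G)\le\mathcal{V}^{s}_{\delta}(G)$ at each fixed $\delta<C$.

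\textbf{The Step~2 gap.} The paper simply asserts that $\{J(p_i,q_i)\}$ is a causal $\delta$-covering. But chronological $\delta$-coverings allow $\mathrm{diam}=\delta$, while the coverings defining $\mathcal{V}^{s}_{\delta}$ require $\mathrm{diam}<\delta$. The causal estimation property only enlarges diamonds, so nothing lets you shrink one. Neither argument proves the identity at each fixed $\delta<C$.

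\textbf{Your fallback chain is backwards.} For $\delta<\delta'$ one has $\mathcal{V}^{s}_{\delta'}\le\mathcal{V}^{s}_{\delta}$. So ``$\mathcal{V}^{s}_{\delta}\le\mathcal{M}^{s}_{\delta}\le\mathcal{V}^{s}_{\delta'}$'' would force exact equality, and its first inequality is exactly the one you cannot prove. What Steps~1 and~2 actually establish is
\[
\mathcal{V}^{s}_{\delta'}(G)\le\mathcal{M}^{s}_{\delta}(G)\le\mathcal{V}^{s}_{\delta}(G)\qquad\text{for } \delta<C,\ \delta<\delta',
\]
and hence $\lim_{\delta\to0}\mathcal{M}^{s}_{\delta}(G)=\mathcal{V}^{s}(G)$.

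\textbf{Why this still suffices later.} For Theorem~\ref{measurability of the integrand}, write
\[
Y_h(K)=\bigcap_{j\ge j_0}\{y\;|\;\mathcal{M}^{s-t}_{d_X,1/j}(F^{-1}(y)\cap K)<h+1/j\}
\]
directly, instead of identifying $Z_{\alpha,\beta}(K)$ with an $\mathcal{M}$-sublevel set at a fixed $\alpha$. The openness argument there only concerns $\mathcal{M}$, so it goes through unchanged.

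\textbf{How to get the lemma exactly as stated.} Define chronological $\delta$-coverings with $\mathrm{diam}<\delta$, matching the convention for $\mathcal{V}^{s}_{\delta}$. Then Step~2 is verbatim correct. Step~1 still works by choosing $\epsilon_1^{(i)}$ with $\epsilon_1^{(i)}\mathrm{diam}_d(J(p_i,q_i))<\delta$.
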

\begin{proof}
Since $X$ satisfies the causal estimation property, we can assign causal estimation neighborhood for each $x\in A$. Thus, since $A$ is compact and $D\coloneq d(E, A^{c})>0$, letting $C\in (0, D)$ be a Lebesgue number for the covering $\{U_{x}\}_{x\in A}$ of $A$, any chronological diamond $I(p,q)$ (resp. causal diamond $J(p,q)$) with $\textrm{diam}_{d}(I(p,q))<C$ and $I(p,q)\cap E\not=\emptyset$ (resp. with $\textrm{diam}_{d}(J(p,q))<C$ and $J(p,q)\cap E\not=\emptyset$) is included in a causal estimation neighborhood.

Let $G\subseteq E$. Take $\delta\in(0, C)$. For any chronological $\delta$-covering $\{I(p_{i}, q_{i})\}_{i=1}^{\infty}$ of $G$ with $G\cap I(p_{i},q_{i})\not=\emptyset$ for any $i\in\mathbb{N}$, we see that $\{J(p_{i}, q_{i})\}_{i=1}^{\infty}$ forms a causal $\delta$-covering of $G$, and thus it follows that
\[\mathcal{V}^{s}_{\delta}(G)\le\displaystyle\sum_{i=1}^{\infty}\rho_{s}(J(p_{i}, q_{i}))=\displaystyle\sum_{i=1}^{\infty}\rho_{s}(I(p_{i}, q_{i})).\]
Taking the infimum over such chronological $\delta$-coverings of $G$, we have $\mathcal{V}^{s}_{\delta}(G)\le\mathcal{M}^{s}_{\delta}(G)$. Next, we show the reverse inequality. Let $\epsilon> 1$ and take any causal $\delta$-covering $\{J(p_{i},q_{i})\}_{i=1}^{\infty}$ of $G$ with $G\cap J(p_{i},q_{i})\not=\emptyset$ for any $i\in\mathbb{N}$. We assign a chronological ($\epsilon$, $2^{-i}(\epsilon-1)$)-estimation $I(\tilde{p}_{i},\tilde{q}_{i})$ for each $J(p_{i},q_{i})$. Then, we have
 \[\mathcal{M}^{s}_{\epsilon\cdot\delta}(G)\le\displaystyle\sum_{i=1}^{\infty}\rho_{s}(I(\tilde{p}_{i}, \tilde{q}_{i}))\le(\epsilon-1)+\displaystyle\sum_{i=1}^{\infty}\rho_{s}(J(p_{i}, q_{i})),\]
 and taking the infimum over all causal $\delta$-coverings of $G$ it follows
 \[\mathcal{M}^{s}_{\epsilon\cdot\delta}(G)\le(\epsilon-1)+\mathcal{V}^{s}_{\delta}(G).\]
 Letting $\epsilon\to1$, $\mathcal{M}^{s}_{\delta}(G)\le\mathcal{V}^{s}_{\delta}(G)$ follows.
\end{proof}

\begin{Remark}
When we merely require the coincidence of $\mathcal{M}^{s}$ and $\mathcal{V}^{s}$, we can weaken conditions in Lemma \ref{Estimation of causal diamond by chronological dismonds}. First, we can replace the condition, $\textrm{diam}_{d}(I(p,q))=\textrm{diam}_{d}(J(p,q))$ with that there exists a constant $C\ge1$ such that $\textrm{diam}_{d}J(p,q)\le C\textrm{diam}_{d}(I(p,q))$. Moreover, the requirement for  any causal $\epsilon$-estimation $I(\tilde{p}, \tilde{q})$ of $J(p,q)$ to hold $\textrm{diam}_{d}(I(\tilde{p}, \tilde{q}))\le\epsilon\cdot\textrm{diam}_{d}(J(p,q))$ is weakened to that $I(\tilde{p}, \tilde{q})\le C'\cdot\textrm{diam}_{d}(J(p,q))$ holds for a constant $C'\ge1$ which does not depend on the choice of $J(p,q)$. 
\end{Remark}

Then, we prove Theorem \ref{measurability of the integrand}.

\begin{proof}[Proof of Theorem \ref{measurability of the integrand}]
 Since $\mathcal{V}^{s}_{d_{X}}$ is inner regular, we have a decomposition of $E$ as follows:
  \[E=N\cup\displaystyle\bigcup_{i=1}^{\infty}K_{i},\]
where $N$ is $\mathcal{V}^{s}_{d_{X}}$-negligible, $K_{i}$ is compact, and $K_{i}\subseteq K_{i+1}$ holds for each $i\in \mathbb{N}$.
Notice that by Theorem $\ref{LEilenberg's inequality}$ and Lemma \ref{zero integration imply zero a.e.}, for $\mathcal{V}^{t}_{d_{Y}}$-almost every $y\in Y$, ${V}^{s-t}_{d_{X}}(F^{-1}(y)\cap N)=0$ holds. Then, we have
\[\mathcal{V}^{s-t}_{d_{X}}(F^{-1}(y)\cap E)=\mathcal{V}^{s-t}_{d_{X}}\left(F^{-1}(y)\cap \displaystyle\bigcup_{i=1}^{\infty}K_{i} \right)=\displaystyle\lim_{i\to \infty}\mathcal{V}^{s-t}_{d_{X}}(F^{-1}(y)\cap K_{i}).\]
Thus, it remains to show that $y\mapsto\mathcal{V}^{s-t}_{d_{X}}(F^{-1}(y)\cap K)$ is a $\mathcal{V}^{t}_{d_{Y}}$-measurable function for any compact subset $K\subseteq X$. To this end, it suffices to prove that for any compact set $K\subseteq X$ and any $h\in\mathbb{R}$, $Y_{h}(K)\coloneq \{y\in Y\;|\;\mathcal{V}^{s-t}_{d_{X}}(F^{-1}(y)\cap K)\le h\}$ is a $\mathcal{V}^{t}_{d_{Y}}$-measurable set. Since $Y_{h}(K)=\emptyset$ if $h<0$, we can assume $h\ge 0$. Moreover, we observe 
\[Y_{h}(K)=\displaystyle\bigcap_{j=1}^{\infty}\left\{y\in Y\;\Bigg{\vert}\; \mathcal{V}^{s-t}_{d_{X},\frac{1}{j}}(F^{-1}(y)\cap K)<h+\frac{1}{j}\right\},\]
and thus for the claim it suffices to show that $Z_{\alpha, \beta}(K)\coloneq\big{\{}y\in Y\;|\; \mathcal{V}^{s-t}_{d_{X},\alpha}(F^{-1}(y)\cap K)<\beta\big{\}}$ is a $\mathcal{V}^{t}_{d_{Y}}$-measurable set for sufficiently small $\alpha>0$ and any $\beta>0$. Actually, we can show that $Z_{\alpha, \beta}(K)$ is an open set in the following way. Since we assume $X$ satisfies the causal estimation property, $d_{X}$ is proper, and $\alpha>0$ is sufficiently small, applying Lemma \ref{Estimation of causal diamond by chronological dismonds} for $K$, we have $\mathcal{M}^{s-t}_{d_{X},\alpha}=\mathcal{V}^{s-t}_{d_{X},\alpha}$ over $K$ and especially $Z_{\alpha, \beta}(K)=\big{\{}y\in Y\;|\; \mathcal{M}^{s-t}_{d_{X},\alpha}(F^{-1}(y)\cap K)<\beta\big{\}}$. Thus, take any $y\in Z_{\alpha, \beta}(K)$, and we can get a familly of chronological diamonds $\{I_{i}\}_{i=1}^{\infty}$ with $\mathrm{diam}_{d_{X}}(I_{i})<\alpha$ and $I_{i}\cap (F^{-1}(y)\cap K)\not=\emptyset$ for all $i\in\mathbb{N}$ such that
\[\left\{ \begin{aligned}
&F^{-1}(y)\cap K \subseteq \displaystyle\bigcup_{i=1}^{\infty}I_{i},\\
&\displaystyle\sum_{i=1}^{\infty}\rho_{s-t}(I_{i})<\beta.\\
 \end{aligned} \right.\]
Then, it is sufficient for the claim that for any sequence $\{y_{n}\}_{n=1}^{\infty}$ in $Y$ converging to $y$, we can find $M\in \mathbb{N}$ such that for all $m\ge M$, $F^{-1}(y_{m})\cap K \subseteq \displaystyle\bigcup_{i=1}^{\infty}I_{i}$ holds. Assume that there exists a sequence $\{y_{n}\}_{i=1}^{\infty}$ converging to $y$ but we can find $M\in\mathbb{N}$ mentioned above. Then, we can get a subsequence $\{y_{nk}\}_{k=1}^{\infty}\subseteq \{y_{n}\}_{n=1}^{\infty}$ such that $\Big{(}F^{-1}(y_{nk})\cap K\Big{)} \setminus \displaystyle\bigcup_{i=1}^{\infty}I_{i}\not=\emptyset$ and a sequence $\{x_{nk}\}_{k=1}^{\infty}$ in $X$ such that $x_{nk}\in \Big{(}F^{-1}(y_{nk})\cap K\Big{)} \setminus \displaystyle\bigcup_{i=1}^{\infty}I_{i}$. Notice that since $\{x_{nk}\}_{k=1}^{\infty}\subseteq K$ and $I_{i}$ is open, we can get a non-relabeled subsequence of $\{x_{nk}\}_{k=1}^{\infty}$ which converges to a point $x\in K\setminus\displaystyle\bigcup_{i=1}^{\infty}I_{i}$. Then, since $F$ is continuous, we see that $F(x)=\displaystyle\lim_{k\to\infty}F(x_{nk})=\displaystyle\lim_{k\to \infty}y_{nk}=y$, and it implies $x\in F^{-1}(y)\cap K\subseteq \displaystyle\bigcup_{i=1}^{\infty}I_{i}$. This is a contradiction, and the desired claim is attained.
\end{proof}

To conclude, Theorem \ref{LORENTZIAN COAREA INEQUALITY} is implied by Theorem \ref{LEilenberg's inequality} and Theorem \ref{measurability of the integrand} immediately.

\section{Outlook}
When the inequality in Theorem \ref{LORENTZIAN COAREA INEQUALITY} becomes equality is one natural question. In the case of metric spaces mentioned in Theorem \ref{coarea inequality}, we can see that equality holds when the domain, $X$ and the target space, $Y$ are rectifiable metric spaces. It is proved through the differentiability of (local) Lipschitz maps which follows from Rademacher's theorem. Therefore, to get the equality condition of Lorentzian coarea inequality, one possible direction is to get a Lorentzian counterpart of Rademacher's theorem for some class of maps including uniformly $d$-controlling and timelike Lipschitz maps.

The coarea inequality or coarea formula is widely used in the geometry of metric spaces. One important application of that is to prove the equivalence of $(q,1)$-Poincar\'{e} inequality with $1\le q<\infty$ and relative isoperimetric inequality (see $\cite{R}$). Through the Lorentzian coarea inequality, we can consider problems known in the geometry of metric spaces.

\end{document}